\documentclass[11pt]{amsart}
\voffset=-0.05\textheight \textheight=1.1\textheight
\hoffset=-0.1\textwidth \textwidth=1.2\textwidth

\usepackage{amssymb}
\usepackage{amsmath}
\usepackage[hidelinks]{hyperref}
\usepackage{etoolbox}
\usepackage{enumitem}
\usepackage{xcolor}
\usepackage{tikz-cd}
\usepackage[normalem]{ulem}
\hypersetup{
	colorlinks,
	linkcolor={red!30!black},
	citecolor={blue!50!black},
	urlcolor={blue!80!black}
}

\newtheorem{theorem}{Theorem}[section]
\newtheorem{lemma}[theorem]{Lemma}

\newtheorem{proposition}[theorem]{Proposition}

\newtheorem{conjecture}[theorem]{Conjecture}

\theoremstyle{remark}
\newtheorem{remark}[theorem]{Remark}

\newcommand{\p}{\vskip .4cm}

\newcommand{\F}{\mathbb{F}}
\newcommand{\Z}{\mathbb{Z}}
\newcommand{\Q}{\mathbb{Q}}

\newcommand{\C}{\mathbb{C}}
\newcommand{\Cc}{\mathbb{C}^{\times}}
\newcommand{\f}{\mathfrak{f}}

\newcommand{\cO}{\mathcal{O}}

\newcommand{\til}{\tilde}

\newcommand{\ra}{\rightarrow}

\newcommand{\Lie}{\operatorname{Lie}}

\newcommand{\Ad}{\operatorname{Ad}}

\newcommand{\Frob}{\mathrm{Frob}}

\newcommand{\Ind}{\mathrm{Ind}}
\newcommand{\Tr}{\operatorname{Tr}}

\newcommand{\Id}{\operatorname{Id}}
\newcommand{\fg}{\mathfrak{g}}

\newcommand{\fm}{\mathfrak{m}}

\renewcommand{\span}{\operatorname{span}}

\newcommand{\matr}[1]{\left[\begin{matrix}#1\end{matrix}\right]}

\newcommand{\supp}{\operatorname{supp}}

\newcommand{\WF}{\operatorname{WF}}
\newcommand{\bWF}{\overline{\operatorname{WF}}}
\newcommand{\fixme}[1]{{\color{red}#1}}

\newcommand{\Sp}{\mathrm{Sp}}
\newcommand{\PSp}{\mathrm{PSp}}

\begin{document}

\title{On two definitions of wave-front sets for $p$-adic groups}
\author{Cheng-Chiang Tsai}
\email{chchtsai@gate.sinica.edu.tw}
\address{Institute of Mathematics, Academia Sinica, 6F, Astronomy-Mathematics Building, No. 1, Sec. 4, Roosevelt Road, Taipei, Taiwan,\vskip.05cm
\noindent also Department of Applied Mathematics, National Sun Yat-Sen University, and Department of Mathematics, National Taiwan University}
\thanks{The author is supported by NSTC grants 110-2115-M-001-002-MY3, 113-2115-M-001-002 and 113-2628-M-001-012}
\begin{abstract}
	The wave-front set for an irreducible admissible representation of a $p$-adic reductive group is the set of maximal nilpotent orbits which appear in the local character expansion. By M\oe glin-Waldspurger, they are also the maximal nilpotent orbits whose associated degenerate Whittaker models are non-zero. However, in the literature there are two versions commonly used, one defining maximality using analytic closure and the other using Zariski closure. We show that these two definitions are inequivalent for $G=\Sp_4$.
\end{abstract}
\makeatletter
\patchcmd{\@maketitle}
{\ifx\@empty\@dedicatory}
{\ifx\@empty\@date \else {\vskip3ex \centering\footnotesize\@date\par\vskip1ex}\fi
	\ifx\@empty\@dedicatory}
{}{}
\patchcmd{\@adminfootnotes}
{\ifx\@empty\@date\else \@footnotetext{\@setdate}\fi}
{}{}{}
\makeatother

\maketitle 

\vskip-1cm $\;$

\tableofcontents

\vskip-1cm $\;$

\section{Introduction}

Let $F$ be a finite extension of $\Q_p$ and $G$ be a connected reductive group over $F$. Write $\fg:=\Lie G$. The local character expansion of Howe and Harish-Chandra \cite[Thm. 16.2]{HC} asserts that for any irreducible admissible $\C$-representation $\pi$ of $G(F)$, there exist constants $c_{\cO}(\pi)\in\C$ indexed by nilpotent $\Ad(G(F))$-orbits $\cO\subset\fg(F)$, together with a neighborhood $U=U_{\pi}$ of $0\in\fg(F)$, such that the character $\Theta_{\pi}$ of $\pi$ satisfies the following identity of distributions on $U$:
\begin{equation}\label{LCE}
	(\Theta_{\pi}\circ\log^*)|_U\equiv\sum_{\cO}c_{\cO}(\pi)\hat{I}_{\cO}|_U.
\end{equation}
Here $I_{\cO}$ is the distribution of integrating a function on $\cO$ with any $G(F)$-invariant positive measure, and $\hat{I}_{\cO}$ its Fourier transform, namely $\hat{I}_{\cO}(f):=I_{\cO}(\hat{f})$.

In \cite{MW87}, M\oe glin and Waldspurger generalized a result of Rodier \cite{Rod75} and showed that for $\cO\in\max\{\cO\;|\;c_{\cO}(\pi)\not=0\}$, the quantity $c_{\cO}(\pi)$ with suitable normalization is equal to the dimension of the degenerate Whittaker model for $\pi$ relative to $\cO$. Degenerate Whittaker models are local analogues and necessary conditions for existence of Fourier coefficients for automorphic forms. The set $\max\{\cO\;|\;c_{\cO}(\pi)\not=0\}$ is therefore of particular interest, and is typically called the {\bf wave-front set}. However, there are two partial orders commonly used in the literature: For two nilpotent $\Ad(G(F))$-orbits $\cO_1$ and $\cO_2$ the partial order $\cO_1<\cO_2$ is defined either (i) if the analytic closure (using the Hausdorff $p$-adic topology on $\fg(F)$) of $\cO_1$ is strictly contained in the analytic closure of $\cO_2$, or alternatively (ii) if the Zariski closure of $\cO_1$ is strictly contained in the Zariski closure of $\cO_2$.

Let us denote by $\WF^{rat}(\pi):=\max\{\cO\;|\;c_{\cO}(\pi)\not=0\}$ given by the first definition, and $\WF^{Zar}(\pi)$ the analogous set given by the second definition. Since the Zariski closure is larger than the analytic closure, we have an obvious inclusion $\WF^{rat}(\pi)\supseteq \WF^{Zar}(\pi)$. At the same time, there is the notion of geometric wave-front sets: Fix an algebraic closure $\bar{F}$ of $F$ and let $\bWF^{rat}(\pi)$ (resp. $\bWF^{Zar}(\pi)$) be the set of $\Ad(G(\bar{F}))$-orbits in $\fg(\bar{F})$ that meet those in $\WF^{rat}(\pi)$ (resp. $\WF^{Zar}(\pi)$). Again we have $\bWF^{rat}(\pi)\supseteq \bWF^{Zar}(\pi)$. We note that by \cite[Prop. 3.5.75]{Poo17}, any $G(F)$-orbit $\cO\subset\fg(F)$ is Zariski dense in the $G(\bar{F})$-orbit it sits in. Hence $\bWF^{Zar}(\pi)$ is equal to the set of maximal geometric orbits that appear in (\ref{LCE}). We thank Emile Okada for clarifying this.

The set $\WF^{rat}(\pi)$ was used in \cite{MW87}, \cite{Moe96} and \cite{GGS21}, and many others. On the other hand, $\bWF^{Zar}(\pi)$ was used in for example \cite{Wal18}. Both $\WF^{rat}(\pi)$ and $\bWF^{Zar}(\pi)$ were discussed in \cite{CMBO23} while their main results determine $\bWF^{Zar}(\pi)$ but not $\bWF^{rat}(\pi)$. Nevertheless, in \cite{JLZ22a} the main conjecture 
\cite[Conj. 1.3]{JLZ22a} is stated for $\WF^{Zar}(\pi)$ but it seems that the spirit might work for $\WF^{rat}(\pi)$ as well. Given the abundance of results on the topic, it is desirable to know how/whether $\WF^{rat}(\pi)$ and $\WF^{Zar}(\pi)$ (resp. $\bWF^{rat}(\pi)$ and $\bWF^{Zar}(\pi)$) could be different. In fact, the long-standing conjecture about geometric wave-front sets, proposed and proved for $\mathrm{GL}_n$ in \cite{MW87}, asserted that:

\begin{conjecture}\label{conj:geom} For any irreducible admissible representation $\pi$ of $G(F)$, the set $\bWF^{rat}(\pi)$ is a singleton.
\end{conjecture}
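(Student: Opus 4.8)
The plan is to deduce Conjecture~\ref{conj:geom} from the equality $\WF^{rat}(\pi)=\WF^{Zar}(\pi)$, which I would attack by an induction on the nilpotent partial order in the spirit of the argument for $\mathrm{GL}_n$ in \cite{MW87}. Granting that $\bWF^{Zar}(\pi)$ is a singleton (known for $\mathrm{GL}_n$ and for the cases in \cite{CMBO23}, and expected in general), the set $\bWF^{rat}(\pi)$ is automatically a singleton once every $F$-rational orbit $O\subset\fg(F)$ with $c_{O}(\pi)\neq 0$ that is maximal for the analytic order is also maximal for the Zariski order. So I would assume, towards a contradiction, that $O_1$ has $c_{O_1}(\pi)\neq 0$ and is analytically maximal, but lies in the Zariski boundary of the geometric orbit $\cO_2$ of some $O_2$ with $c_{O_2}(\pi)\neq 0$.

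The contradiction is to be produced in two steps. The first is geometric: that the $F$-points of a nilpotent orbit closure are covered by the analytic closures of the $F$-rational orbits it contains, $\overline{\cO_2}^{\,Zar}(F)=\bigcup_{O\subset\cO_2(F)}\overline{O}^{\,an}$, so that $O_1\subset\overline{O}^{\,an}$ for some $F$-rational $O\subset\cO_2(F)$; this is vacuous for $\mathrm{GL}_n$, while for $\Sp_{2n}$ one would verify it by tracking the quadratic-form invariants attached to a nilpotent element under degeneration. The second is a representation-theoretic propagation: that $c_{O_2}(\pi)\neq 0$ forces $c_{O}(\pi)\neq 0$ for a rational form $O\subset\cO_2(F)$ whose analytic closure contains $O_1$, using Jacquet modules along a parabolic whose nilradical meets $\cO_2$ together with M\oe glin--Waldspurger's identification of $c_{O}(\pi)$ with the dimension of the degenerate Whittaker model attached to $O$. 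Combined, these would give an $F$-rational $O$ with $O_1<O$ in the analytic order and $c_{O}(\pi)\neq 0$, contradicting the analytic maximality of $O_1$; two incomparable geometric orbits both meeting $\WF^{rat}(\pi)$ would be excluded by applying the same propagation to a common larger orbit.

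The main obstacle I foresee is the interaction of the geometric covering with the well-known subtlety that $c_{O}(\pi)$ can be nonzero for some $F$-rational forms of a geometric orbit and zero for others: the propagation in step two must land on precisely a rational form whose analytic closure contains the prescribed $O_1$, and there is no a priori reason the rational form forced by nonvanishing of $c$ is the one required by the geometry of degenerations. For $\fg=\mathfrak{sp}_4$, whose geometric nilpotent orbits form a chain $0<[2,1,1]<[2,2]<[4]$, this comes down to a finite but delicate comparison of how the rational forms of $[2,1,1]$ sit inside the analytic closures of the rational forms of $[2,2]$, and likewise for $[2,2]$ inside $[4]$. Carrying out that comparison for $\Sp_4$ — and determining whether both the covering and this matching of rational forms genuinely hold — is what the remainder of the paper is devoted to; if they fail for some $\pi$, the argument above does not close and the conjecture must be re-examined.
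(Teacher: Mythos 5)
There is a fundamental problem: the statement you are trying to prove is false, and this paper exists precisely to exhibit a counterexample. Your strategy rests on two inputs, namely that $\bWF^{Zar}(\pi)$ is a singleton (Conjecture \ref{conj:geom'}) and that analytic and Zariski maximality coincide (Conjecture \ref{conj:equivdef}); the first already fails by the counterexample of \cite{Tsa24}, and the second is exactly what Theorem \ref{main} disproves. For $G=\Sp_4$ and $\pi$ an epipelagic component of $\operatorname{c-ind}_{G(F)_{1/2}}^{G(F)}\til\psi_A$, the set $\WF^{rat}(\pi)$ contains the two regular rational orbits through $n_{-\varpi^{-1}}$ and $n_{i/2}$ \emph{and} the subregular rational orbit through $e$, so $\bWF^{rat}(\pi)$ contains both the geometric orbits $[4]$ and $[2^2]$ and is not a singleton.

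The step of your argument that breaks is the one you yourself flagged as the ``main obstacle'': the matching between the rational forms forced by nonvanishing of the coefficients and the rational forms required by the analytic degeneration. Lemma \ref{lem:quad} shows that $e_{a,b,c}$ lies in the analytic closure of $\Ad(G(F))n_d$ if and only if the quadratic form $(\cdot,\cdot)_{a,b,c}$ represents $d$; and Propositions \ref{prop:reg} and \ref{prop:sub} show that for the element $A$ of (\ref{A}) the only regular orbits with $c_{\cO}(A)\neq 0$ are precisely the two whose analytic closures do \emph{not} contain $e$, while $c_{\cO}(A)\neq 0$ for $\cO=\Ad(G(F))e$. So although $e$ lies in the Zariski closure of the geometric regular orbit, it lies in no analytic closure of a regular rational orbit carrying a nonzero coefficient, and your ``propagation'' step has no rational form to land on. In short, the covering-plus-matching you hoped to verify for $\mathfrak{sp}_4$ genuinely fails, your contradiction never materializes, and the correct conclusion is the negation of the conjecture rather than its proof; the honest way to salvage your write-up is to turn the finite comparison of rational forms of $[2^2]$ inside the closures of the rational forms of $[4]$ (which you correctly identified as the crux) into the counterexample, as the paper does.
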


Since $\bWF^{Zar}(\pi)$ is obviously non-empty, the validity of Conjecture \ref{conj:geom} for any $\pi$ is equivalent to the validities of the following two statements:

\begin{conjecture}\label{conj:geom'} (Counterexample in \cite[Thm. 1.1]{Tsa24}) $\bWF^{Zar}(\pi)$ is a singleton.
\end{conjecture}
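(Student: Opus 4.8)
As the parenthetical indicates, this statement is \emph{false}: by \cite[Thm. 1.1]{Tsa24} there is an irreducible admissible representation $\pi$ with $|\bWF^{Zar}(\pi)|\geq 2$. So Conjecture~\ref{conj:geom'} is to be \emph{disproved} by constructing such a counterexample, and I outline the natural route to one. The place to look is among depth-zero supercuspidal representations $\pi=\operatorname{ind}_{G_x}^{G(F)}\tilde\sigma$, with $x$ a vertex of the Bruhat--Tits building, $G_x$ the corresponding parahoric, and $\sigma$ an irreducible cuspidal representation of the reductive quotient $\mathsf G(\F_q)=G_x/G_{x,0+}$. For such $\pi$ one knows (Barbasch--Moy and DeBacker, refined by Okada) that under the Moy--Prasad identification $\fg_{x,0}/\fg_{x,0+}\cong\mathsf g(\F_q)$ the coefficients $c_\cO(\pi)$ in (\ref{LCE}) are read off from the finite group: the orbits $\cO\subset\fg(F)$ with $c_\cO(\pi)\neq 0$ are the $G(F)$-orbits of suitable $F$-lifts of the $\F_q$-rational nilpotent orbits $\bar\cO\subset\mathsf g$ for which the multiplicity $\langle\sigma,\Gamma_{\bar\cO}\rangle$ of $\sigma$ in the generalized Gelfand--Graev representation $\Gamma_{\bar\cO}$ is nonzero. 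This separates the problem into (a) the finite-group wave-front set $\{\bar\cO:\langle\sigma,\Gamma_{\bar\cO}\rangle\neq 0\}$, controlled by Kawanaka's multiplicity formula and Lusztig's theory of unipotent supports, and (b) the lifting map sending $\bar\cO$ to the $G(\bar F)$-orbit in $\fg(\bar F)$ of a generic $F$-rational lift of $\bar\cO$.

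The plan is then to choose $G$ (a classical group of small-to-moderate rank --- a symplectic group will do), a vertex $x$, and a cuspidal $\sigma$ so that the finite-group wave-front set of $\sigma$ contains two $\F_q$-rational nilpotent orbits $\bar\cO_1,\bar\cO_2$ whose lifts $\cO_1,\cO_2\subset\fg(\bar F)$ are distinct geometric orbits, neither in the Zariski closure of the other, with nothing in the support strictly above either. The point exploited in step (b) is that the lift is sensitive to the $\F_q$-rational structure and need not preserve the closure order: a single $\F_q$-geometric nilpotent class of a classical group splits into several rational forms (distinguished by discriminants and square classes of the quadratic forms on the even Jordan blocks), and these --- or $\F_q$-classes lying in the closure of the unipotent support of $\sigma$ --- can lift to geometric orbits of $\fg$ that are Zariski-incomparable. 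One then (i) pins down a vertex whose reductive quotient supplies the required rational orbits, (ii) selects $\sigma$ via Lusztig's classification so that its wave-front set realizes both $\bar\cO_1$ and $\bar\cO_2$, checking nonvanishing of $\langle\sigma,\Gamma_{\bar\cO_1}\rangle$ and $\langle\sigma,\Gamma_{\bar\cO_2}\rangle$ with Kawanaka's formula, and (iii) concludes $c_{\cO_1}(\pi),c_{\cO_2}(\pi)\neq 0$, so $\bWF^{Zar}(\pi)\supseteq\{\cO_1,\cO_2\}$ is not a singleton.

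The main obstacle is step (b), and hence (iii): one needs an explicit, checkable description of the $G(\bar F)$-orbit of the lift of a given rational nilpotent orbit --- for instance via an $\mathfrak{sl}_2$-triple or Bala--Carter datum written over $F$ and base-changed to $\bar F$, compared with the partition-with-form data over $\F_q$ --- followed by the combinatorial verification that the two lifted orbits are genuinely Zariski-incomparable and maximal in the support. A secondary, routine nuisance is the bookkeeping of hypotheses on $p$ (equivalently $q$): $q$ must be large and of the parity that makes the square-class invariants available, so that the Moy--Prasad comparison, Kawanaka's construction of generalized Gelfand--Graev representations, and Lusztig's determination of unipotent supports all apply simultaneously. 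An explicit $\pi$ meeting these requirements is exhibited in \cite[Thm. 1.1]{Tsa24}, disproving Conjecture~\ref{conj:geom'} and showing that the failure of Conjecture~\ref{conj:geom} is already visible for geometric orbits ordered by Zariski closure.
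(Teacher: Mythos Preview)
The paper does not itself prove or disprove Conjecture~\ref{conj:geom'}; it merely records the conjecture and points to \cite{Tsa24} for the counterexample. There is therefore no proof in this paper to compare your proposal against. What little the paper does say about the counterexamples points in a different direction from your sketch: it indicates that \cite{Tsa24} works with \emph{ramified} groups, and that the author's separate (in-progress, with Lo) counterexample for \emph{split} groups uses epipelagic representations of ``much higher rank'' groups --- not depth-zero supercuspidals of a small symplectic group.

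Your outline is a coherent strategy, but it is only a strategy: you yourself flag step~(b)/(iii) --- identifying two rational nilpotent orbits in the finite reductive quotient whose lifts to $\fg(\bar F)$ land in Zariski-incomparable geometric orbits, and that are both maximal in the support of some cuspidal $\sigma$ --- as the main obstacle, and you do not resolve it. That is the entire content of the problem. The paper's aside that the split-group counterexample requires much higher rank and positive-depth (epipelagic) input is a warning sign that the depth-zero route for a small classical group may simply not produce the needed incomparable lifts; at minimum you would need to exhibit the specific $(G,x,\sigma,\bar\cO_1,\bar\cO_2)$ and carry out the Kawanaka and lifting computations, rather than asserting that ``a symplectic group will do.'' As written, the proposal defers to \cite{Tsa24} for the actual example, so it is a plausible commentary on where one might look rather than an independent argument.
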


\begin{conjecture}\label{conj:equivdef} We have $\bWF^{rat}(\pi)=\bWF^{Zar}(\pi)$ or equivalently $\WF^{rat}(\pi)=\WF^{Zar}(\pi)$.
\end{conjecture}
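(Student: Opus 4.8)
The plan is to treat the two inclusions of $\WF^{rat}(\pi)=\WF^{Zar}(\pi)$ separately. The inclusion $\WF^{rat}(\pi)\supseteq\WF^{Zar}(\pi)$ can be made unconditional. Suppose $\cO\in\WF^{Zar}(\pi)$, so $c_{\cO}(\pi)\neq 0$, and suppose toward a contradiction that $\cO$ is not maximal in the analytic order among $\{\cO\;|\;c_{\cO}(\pi)\neq 0\}$; choose $\cO'$ with $c_{\cO'}(\pi)\neq 0$ and $\overline{\cO}^{an}\subsetneq\overline{\cO'}^{an}$. Passing to Zariski closures gives $\overline{\cO}^{Zar}\subseteq\overline{\cO'}^{Zar}$. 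If the containment is strict then $\cO$ was not Zariski-maximal, a contradiction; if it is an equality then by \cite[Prop.~3.5.75]{Poo17} the orbits $\cO$ and $\cO'$ are Zariski-dense in one common geometric orbit $O$. But nilpotent centralizers are smooth in good residue characteristic, so the orbit map $G\to O$ is submersive on $F$-points and each $G(F)$-orbit is open, hence also closed, in $O(F)$; so no $G(F)$-orbit lies in the analytic closure of another inside $O$, contradicting $\cO\subseteq\overline{\cO}^{an}\subsetneq\overline{\cO'}^{an}$. Thus this inclusion needs no representation-theoretic input.

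Everything therefore hinges on $\WF^{rat}(\pi)\subseteq\WF^{Zar}(\pi)$. Unwinding the definitions, the scenario to be excluded is this: there is a nilpotent $G(F)$-orbit $\cO$ with $c_{\cO}(\pi)\neq 0$ which is analytically maximal among $\{\cO\;|\;c_{\cO}(\pi)\neq 0\}$, yet a strictly larger geometric orbit $O'$ is hit, by some $F$-form $\cO'$ with $c_{\cO'}(\pi)\neq 0$ but $\cO\not\subseteq\overline{\cO'}^{an}$. To rule this out I would try to prove that the support of the local character expansion is ``saturated'' for the analytic-closure relation within each geometric orbit: if $c_{\cO'}(\pi)\neq 0$ for one $F$-form $\cO'$ of $O'$, then $c_{\cO''}(\pi)\neq 0$ for enough further $F$-forms $\cO''$ — of $O'$, or of geometric orbits lying between $O$ and $O'$ — that $\cO$ must lie in the analytic closure of one of them, contradicting the analytic maximality of $\cO$. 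The natural inputs are the M\oe glin-Waldspurger description of the top coefficients via degenerate Whittaker models, parabolic descent of the character expansion to Jacquet modules $r_M(\pi)$ (which relates $c_{\cO}(\pi)$ to coefficients for Levi subgroups and can be used to move between different $F$-forms of a fixed geometric orbit via induction of nilpotent orbits), and the homogeneity of the Fourier transforms $\hat I_{\cO}$, which constrains the linear combinations that can occur in \eqref{LCE}.

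The hard part — and I expect it to be the real obstruction — is precisely this saturation claim, because the analytic-closure relation among $F$-forms of nilpotent orbits is strictly finer than the Zariski (geometric) one. The analytic closure of a single $F$-form $\cO'$ of $O'$ contains only a particular, topologically pinned-down collection of $F$-forms of each smaller geometric orbit, whereas the set of $F$-forms on which $c_{\bullet}(\pi)$ is nonzero is governed by the representation theory of $\pi$ and of its Jacquet modules; there is no evident mechanism forcing these two collections to coincide. In the absence of such a rigidity statement one should expect the equality to fail whenever rational forms of nilpotent orbits proliferate and the analytic order genuinely refines the Zariski order — for instance for $G=\Sp_4$, whose regular and subregular orbits each split into several $F$-forms with non-saturated closure relations. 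I therefore do not expect $\WF^{rat}(\pi)=\WF^{Zar}(\pi)$ to hold in general, and a successful proof would have to begin by isolating and proving the saturation property above.
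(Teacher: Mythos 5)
Your closing paragraph is the correct conclusion, and it means the proof cannot be completed: the displayed statement is not a theorem but precisely the conjecture that this paper refutes. The ``saturation'' property you isolate as the missing ingredient genuinely fails, and it fails exactly where you predicted, for $G=\Sp_4$ with its several rational forms of regular and subregular orbits. Theorem \ref{main} produces an epipelagic representation $\pi$ (any irreducible component of $\operatorname{c-ind}_{G(F)_{1/2}}^{G(F)}\til{\psi}_A$) for which $\WF^{rat}(\pi)$ contains the two regular orbits of $n_{-\varpi^{-1}}$ and $n_{i/2}$ together with the subregular orbit of $e$; by Lemma \ref{lem:quad} that subregular orbit is not contained in the analytic closure of either of those regular orbits (the relevant quadratic form does not represent $-\varpi^{-1}$ or $i/2$), so it is analytically maximal in the support, while it is of course dominated by the regular orbits in the Zariski order. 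Hence $\WF^{rat}(\pi)\supsetneq\WF^{Zar}(\pi)$. The mechanism is orthogonal to the tools you proposed (M\oe glin--Waldspurger, Jacquet-module descent, homogeneity): by (\ref{Murnaghan}) the coefficients $c_{\cO}(\pi)$ are proportional to the Shalika germs $c_{\cO}(A)$ of a single good semisimple element $A$, Proposition \ref{prop:reg} shows (via Kostant sections and results of Shelstad and Kottwitz) that exactly two of the four regular germs vanish, and Proposition \ref{prop:sub} shows the subregular germ is nonzero by evaluating $I_A$ on a hand-picked test function and counting $\F_q$-points of a genus-one curve. Nothing in the representation theory forces the analytically pinned-down collection of $F$-forms to match the support of $c_{\bullet}(\pi)$, which is exactly the rigidity you suspected was absent.

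Your first paragraph, the unconditional inclusion $\WF^{rat}(\pi)\supseteq\WF^{Zar}(\pi)$, is fine: it is the inclusion the paper calls obvious, and your justification (within one geometric orbit $O$, each $G(F)$-orbit is open and closed in $O(F)$, so no rational form lies in the analytic closure of another form of the same geometric orbit, and Poonen's density result handles the comparison of Zariski closures) is a legitimate way to spell it out. But it carries no content toward the conjecture itself; everything rests on the reverse inclusion, which is false. If you want to turn your write-up into something correct, the productive move is to invert it: take your ``expected failure'' scenario seriously and verify it on the smallest case, which is exactly what the paper does --- split rank $2$ being the smallest absolute rank where the question is non-trivial.
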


As indicated above, the first counterexample for Conjecture \ref{conj:geom} is a counterexample to Conjecture \ref{conj:geom'}. The purpose of this paper is to show that Conjecture \ref{conj:equivdef} also has a counterexample, in fact in the case of split rank 2, which is the smallest absolute rank where Conjecture \ref{conj:equivdef} becomes non-trivial.

Let $p\ge 11$ be any prime number, $q$ a power of $p$ with $q\equiv 1\operatorname{mod}4$, and $F$ any non-archimedean local field with residue field $\F_q$ and fixed uniformizer $\varpi\in F$. Let $G=\Sp_4/_F$ be the group of linear operators on $F^4$ that preserve the symplectic form
\begin{equation}\label{form1}
\langle \vec{x},\vec{y}\rangle=x_1y_4+x_2y_3-x_3y_2-x_4y_1.
\end{equation}
Denote by $\fm$ the maximal ideal and $\fm^0=\cO_F$ the ring of integers in $F$. Consider the Moy-Prasad filtration $(G(F)_r)_{r\in\frac{1}{2}\Z_{\ge 0}}$ ``associated to the Siegel parahoric.'' It is given by
\[
G(F)_n:=\{g\in G(F)\;|\;g-\Id_4\in
\matr{
	\fm^n&\fm^n&\fm^n&\fm^n\\
	\fm^n&\fm^n&\fm^n&\fm^n\\
	\fm^{n+1}&\fm^{n+1}&\fm^n&\fm^n\\
	\fm^{n+1}&\fm^{n+1}&\fm^n&\fm^n	
}
\}\text{, and}
\]
\begin{equation}\label{MP}
G(F)_{n+\frac{1}{2}}:=\{g\in G(F)\;|\;g-\Id_4\in
\matr{
	\fm^{n+1}&\fm^{n+1}&\fm^n&\fm^n\\
	\fm^{n+1}&\fm^{n+1}&\fm^n&\fm^n\\
	\fm^{n+1}&\fm^{n+1}&\fm^{n+1}&\fm^{n+1}\\
	\fm^{n+1}&\fm^{n+1}&\fm^{n+1}&\fm^{n+1}	
}
\}
\end{equation}
for $n\in\Z_{\ge 0}$. The group $G(F)_1$ is a normal subgroup of $G(F)_{1/2}$, and the quotient may be identified as
\[
V:=G(F)_{\frac{1}{2}}/G(F)_1\cong\{\matr{0&0&b&a\\0&0&c&b\\e&d&0&0\\f&e&0&0}\;|\;a,b,c\in\cO_F/\fm,\;d,e,f\in \fm/\fm^2\}.
\]
Fix an additive character $\psi:F\ra\Cc$ that is trivial on $\fm$ but non-trivial on $\cO_F$. Consider
\begin{equation}\label{A}
A:=\matr{
	0&0&0&\varpi^{-1}\\
	0&0&\varpi^{-1}&0\\
	1&0&0&0\\
	0&1&0&0
}\in\fg(F).
\end{equation}
Denote by $\psi_A:V\ra\Cc$ the character $B\mapsto \psi(\Tr(AB))$, and $\til{\psi}_A$ its pullback to $G(F)_{1/2}$. Conjecture \ref{conj:equivdef} is disproved by:

\begin{theorem}\label{main}
	For any irreducible component $\pi$ of the compact induction
	\[
	\operatorname{c-ind}_{G(F)_{\frac{1}{2}}}^{G(F)}\til{\psi}_{A}
	\]
	we have that $\WF^{rat}(\pi)$ contains two regular nilpotent orbits and also a subregular nilpotent orbit. Consequently $\WF^{Zar}(\pi)$ contains only the two regular nilpotent orbits.
\end{theorem}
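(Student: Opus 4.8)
The plan is to recognize $\pi$ as an epipelagic supercuspidal representation and then to read off the nilpotent orbits in its local character expansion near $0$ from the M\oe glin--Waldspurger description \cite{MW87} of the coefficients $c_\cO(\pi)$ in terms of degenerate Whittaker models, applied to the compact induction. Throughout, the hypothesis $p\ge 11$ is what makes the $\exp/\log$ isomorphisms, the homogeneity of the local character expansion, and the M\oe glin--Waldspurger theory available for $\mathfrak{sp}_4$ in this range.

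First I would identify $\pi$. The matrix $A$ is regular semisimple with $A^4=\varpi^{-2}$, and a short computation gives $Z_G(A)(F)\cong E^1\times E^1$ with $E=F(\sqrt\varpi)$, a compact anisotropic torus. Its image $\bar A$ in $\fg_{x,-1/2}/\fg_{x,0}\cong(\fg_{x,1/2}/\fg_{x,1})^*$, where $x$ is the point of the building attached to the Siegel parahoric, is a \emph{stable} vector for the reductive quotient $\mathsf G_x\cong\mathrm{GL}_2$: the space $\fg_{x,1/2}/\fg_{x,1}$ decomposes $\mathsf G_x$-equivariantly into its upper-right and lower-left symmetric blocks, each a copy of $\Sym^2$ of the standard representation, and $\bar A$ corresponds to a pair of nondegenerate binary quadratic forms, one (from the $\varpi^{-1}$ entries of $A$) with roots $\{0,\infty\}$ and one ($\propto X^2+Y^2$, from the $1$ entries) with roots $\{i,-i\}$. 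The second form splits over $\F_q$ into distinct linear factors precisely because $q\equiv 1\bmod 4$; the four roots are then pairwise distinct, so $\Stab_{\mathsf G_x}(\bar A)$ is finite with closed orbit. By the theory of Reeder--Yu, $\operatorname{c-ind}_{G(F)_{1/2}}^{G(F)}\til\psi_A$ is therefore a finite direct sum of irreducible depth-$\tfrac12$ supercuspidal representations; all of its irreducible components share the unrefined minimal $K$-type $(x,\tfrac12,\bar A)$, hence have the same local character expansion near $0$ by homogeneity, hence the same $\WF^{rat}$ and $\WF^{Zar}$, and it suffices to treat one $\pi$.

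Next, for each nilpotent $\Ad(G(F))$-orbit $\cO$ with representative $N$ and its attached (even) Dynkin grading --- giving a unipotent subgroup $U_N$ and character $\psi_N$ --- I would compute the degenerate Whittaker model $\pi_{N,\psi}$ by Mackey theory for the compact induction: it is a sum over $G(F)_{1/2}\backslash G(F)/U_N$ of the spaces on which $\til\psi_A$ agrees with a $G(F)$-conjugate of $\psi_N$ on the compact group $G(F)_{1/2}\cap {}^{g}U_N$. Passing to Moy--Prasad associated gradeds, this agreement condition becomes the statement that some $G(F)$-conjugate $N'$ of $N$ agrees with $A$ modulo the relevant Moy--Prasad lattices and the subspace cut out by the grading of $N$ --- informally, that $\cO$ is a ``degeneration of $A$'' adapted to that grading. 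Over $\bar F$, since $A$ is regular semisimple the only maximal such $\cO$ is the regular nilpotent orbit, and it does occur, so $\bWF^{Zar}(\pi)$ is the singleton $\{\cO_{\bar F}^{\mathrm{reg}}\}$. Over $F$ the bookkeeping is finer: I expect to show that \emph{exactly two} regular nilpotent $F$-orbits $\cO_1,\cO_2$ satisfy the condition --- distinguished by the two quadratic forms of $\bar A$, i.e.\ by the (harmonic) $4$-point configuration they cut out, with different square-class invariants --- so that $\cO_1,\cO_2\in\WF^{rat}(\pi)$ already, being regular hence automatically maximal; and moreover that at least one \emph{subregular} $F$-orbit $\cO_3$ has $\pi_{N,\psi}\neq 0$, arising from a partial degeneration of $A$ whose leading term is not regular.

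The heart of the argument is then to show that $\cO_3$ is \emph{not} contained in the analytic closure of $\cO_1$, of $\cO_2$, or of any other regular $F$-orbit with non-vanishing coefficient. Granting this, $\cO_3$ is maximal for the analytic order among the orbits occurring in the expansion; since $\pi_{N,\psi}\neq 0$ forces some occurring orbit $\ge\cO_3$ to lie in $\WF^{rat}(\pi)$ and the only orbits strictly above $\cO_3$ are regular ones, we conclude $c_{\cO_3}(\pi)\neq 0$ and $\cO_3\in\WF^{rat}(\pi)$, so $\WF^{rat}(\pi)\supseteq\{\cO_1,\cO_2,\cO_3\}$. For $\WF^{Zar}(\pi)$: by \cite[Prop.~3.5.75]{Poo17} each of $\cO_1,\cO_2$ is Zariski-dense in $\cO_{\bar F}^{\mathrm{reg}}$, so $\cO_3$ is strictly below $\cO_1$ in the Zariski order and $\cO_3\notin\WF^{Zar}(\pi)$; and since every orbit with non-vanishing coefficient is $\le\cO_{\bar F}^{\mathrm{reg}}$ geometrically, it is either one of $\cO_1,\cO_2$ or strictly below $\cO_1$ in the Zariski order, so $\WF^{Zar}(\pi)=\{\cO_1,\cO_2\}$. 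I expect the analytic-closure claim to be the main obstacle, and it is exactly the place where the analytic and Zariski orders diverge: although every subregular $\bar F$-orbit lies in the Zariski closure of the regular one, the particular subregular $F$-orbit $\cO_3$ produced above is ``transverse'' to $\cO_1$ and $\cO_2$ in the Hausdorff topology. I would establish it by an explicit computation with $p$-adic normal forms and valuations --- tracking which subregular $F$-orbits occur as Hausdorff limits of one-parameter families inside $\cO_1$ and $\cO_2$, and exhibiting a square-class invariant separating $\cO_3$ from all of them.
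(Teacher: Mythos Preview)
Your proposal has the right logical skeleton---identify the two regular orbits with nonzero coefficient, exhibit a subregular orbit with nonzero coefficient, and check it lies in the analytic closure of neither---but each of these steps is left as an expectation, and the tools you reach for are not the ones that actually carry the argument.

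The paper does not compute degenerate Whittaker models by Mackey theory. It first invokes Kim--Murnaghan to get $(\Theta_\pi\circ\log^*)|_U=c\cdot\hat I_A|_U$, reducing everything to the Shalika germs $c_\cO(A)$ of the single element $A$. For regular $\cO$, Shelstad \cite{Sh89} and Kottwitz \cite{Kot99} give $c_\cO(A)\ne 0$ iff $\Ad(G(F))A$ meets the Kostant section for $\cO$; two explicit conjugations place $A$ on the sections for $n_{-\varpi^{-1}}$ and $n_{i/2}$, and a Galois-cohomology argument (using that $Z_G(A)$ is anisotropic over $F^{nr}$) rules out the other two. Your heuristic about the ``two quadratic forms of $\bar A$'' does not by itself pin down which two regular $F$-orbits appear. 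For the subregular orbit of $e$, the paper writes down a test function $f$ such that (i) any nilpotent $\cO$ with $I_\cO(f)\ne 0$ has $e$ in its closure, and (ii) $I_A(f)\ne 0$, the latter by exhibiting an $\F_q$-point on the genus-one curve $\bar\epsilon\bar y^2=\frac{\bar\epsilon^2}{4}\bar x^4+1$. Plugging $f$ into the Shalika expansion then forces $c_{\Ad(G(F))e}(A)\ne 0$. The analytic-closure input is Lemma~\ref{lem:quad}: $e_{a,b,c}$ lies in the closure of $\Ad(G(F))n_d$ iff the associated binary form represents $d$, proved by an explicit one-parameter limit in one direction and Witt cancellation (after Djokovi\'c) in the other.

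Your Mackey route is in principle sound for the regular orbits---it would rediscover the Kostant-section criterion---but for the subregular orbit a direct computation of $\pi_{N,\psi}$ over the infinite double coset set $G(F)_{1/2}\backslash G(F)/U_N$ is not visibly tractable, and ``arising from a partial degeneration of $A$'' is not an argument. The test-function trick is precisely what replaces this computation, and it is the genuine gap in your plan.
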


In fact, the subregular orbit is the unique one not contained in the analytic closure of the previous two regular nilpotent orbits.
The representation $\pi$ is one of the so-called epipelagic representations in \cite{RY14}. Prior to this work, similar representations for much higher rank groups had already been studied in a joint work in progress of Chi-Heng Lo and the author to produce a counterexample to Conjecture \ref{conj:geom'} for split groups (rather than for ramified groups as in \cite{Tsa24}). We also remark that in the language of the newer paper \cite{Tsa23b}, we have $\WF^{rat}(\pi)=\WF^{rat}(A)$ and the result may well be interpreted as for the wave-front set of $A\in\fg(F)$.

\subsection*{Acknowledgment} I wish to thank Chi-Heng Lo, Lei Zhang, Emile Okada and Alexander Bertoloni Meli for very helpful conversations. I am grateful to National University of Singapore for a wonderful environment and their hospitality during a visit. I am thankful to the referee for helpful suggestions. Lastly I thank ChatGPT for polishing some English writings.\p

\section{Nilpotent orbits}

For our $G=\Sp_4$, the subregular nilpotent $\Ad(G(F))$-orbits correspond to partition $[2^2]$ and any such orbit has a representative of the form
\[
e_{a,b,c}=\matr{
	0&0&0&0\\
	0&0&0&0\\
	b&a&0&0\\
	c&b&0&0
},\;a,b,c\in F.
\]
Denote by $v_i$ ($1\le i\le 4$) the $i$-th coordinate vector of our $4$-dimensional symplectic space.
The operator $e_{a,b,c}$ defines a non-degenerate quadratic form on $\span(v_1,v_2)$ by 
\begin{equation}\label{eq:quadform}
(X,Y)_{a,b,c}:=\langle X,e_{a,b,c}Y\rangle
\end{equation}
where $\langle\cdot,\cdot\rangle$ is as in (\ref{form1}). The $\Ad(G(F))$-orbit of $e_{a,b,c}$ is uniquely determined \cite[Prop. 5]{Nev11} by the isomorphism class of the quadratic form $(\cdot,\cdot)_{a,b,c}$. Similarly, a regular nilpotent $\Ad(G(F))$-orbit has a representative of the form
\begin{equation}\label{regnil}
n_d=\matr{
	0&0&0&0\\
	1&0&0&0\\
	0&d&0&0\\
	0&0&-1&0
},\; d\in F^{\times}.
\end{equation}
The orbit is again uniquely determined by the image of $d$ in $F^{\times}/(F^{\times})^2$. We show that
\begin{lemma}\label{lem:quad} The element $e_{a,b,c}$ lies in the analytic closure of $\Ad(G(F))n_d$ iff the quadratic form $(\cdot,\cdot)_{(a,b,c)}$ represents $d$, namely $(v,v)_{a,b,c}=d$ for some $v\in \span(v_1,v_2)$.
\end{lemma}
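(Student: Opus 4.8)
The plan is to work directly with a one-parameter family of elements of $\Ad(G(F))n_d$ that degenerates to $e_{a,b,c}$, rather than to argue abstractly about closures. First, fix $v=xv_1+yv_2\in\span(v_1,v_2)$ with $(v,v)_{a,b,c}=d$; concretely this is the equation $2(bxy)+c y^2 + ax^2 = d$ once one expands (\ref{eq:quadform}) using (\ref{form1}), although we should double-check the bookkeeping of which coordinates pair up. The point of the hypothesis ``$(\cdot,\cdot)_{a,b,c}$ represents $d$'' is precisely that such a $v$ exists in $F^2$, not merely in $\bar F^2$. Having fixed $v$, I would extend $v$ to a symplectic-type basis adapted to the nilpotent $n_d$: the key structural fact about the regular nilpotent is that its image is a line and (for the relevant Jordan-type normal form) $v$ spans a distinguished subspace. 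The goal is to produce $g_t\in G(F)$, depending on a parameter $t\in\cO_F$ (or $t\to 0$ analytically), so that $\Ad(g_t)n_d\to e_{a,b,c}$ as $t\to 0$ in the $p$-adic topology, with the leading term of the limit governed exactly by the value $(v,v)_{a,b,c}=d$.

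The cleanest way to organize the ``if'' direction is via a cocharacter (one-parameter subgroup) $\lambda:\mathbb{G}_m\to G$ together with a fixed $g_0\in G(F)$: set $g_t=g_0\,\lambda(t)$ and compute $\Ad(g_t)n_d=\Ad(g_0)\Ad(\lambda(t))n_d$. Decompose $\Ad(g_0)n_d$ into $\lambda$-weight components; if all weights are $\le 0$ and the weight-$0$ component (the limit $\lim_{t\to 0}\Ad(\lambda(t))\Ad(g_0)n_d$ up to conjugation) equals $e_{a,b,c}$, we are done. So the real content is: choose $g_0\in G(F)$ (using the existence of $v$ over $F$) and a cocharacter $\lambda$ so that the associated-graded limit of $\Ad(g_0)n_d$ is exactly $e_{a,b,c}$. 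Because $e_{a,b,c}$ is subregular of type $[2^2]$ and $n_d$ is regular of type $[4]$, this is a standard ``degeneration of partition $[4]$ to $[2^2]$'' picture; the symplectic form forces the relevant invariant to be the discriminant/represented value $d$, which is why $v$ with $(v,v)_{a,b,c}=d$ is exactly the right input. I would write $\Ad(g_0)n_d$ in block form and exhibit $\lambda$ as a diagonal torus element scaling the two ``blocks'' oppositely.

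For the converse (``only if''), I would use the fact that the analytic closure of $\Ad(G(F))n_d$ is contained in its Zariski closure, and that Zariski-closure containments of nilpotent orbits are detected by the Zariski-closed conditions on $e_{a,b,c}$ cutting out the $[2^2]$ stratum, together with the invariant of the quadratic form. Over $\bar F$, $e_{a,b,c}$ always lies in the closure of the regular orbit, so the Zariski condition is vacuous; the extra rigidity comes from working over $F$. Here the argument is: if $e_{a,b,c}=\lim_j \Ad(g_j)n_d$ with $g_j\in G(F)$, then each $\Ad(g_j)n_d=n_{d'}$-type element with the \emph{same} class of $d$ in $F^\times/(F^\times)^2$ (since $\Ad(G(F))$ preserves this class, by the classification recalled before (\ref{regnil})), and the quadratic form $(\cdot,\cdot)_{a,b,c}$ is a limit of the forms attached to $\Ad(g_j)n_d$; one then argues that $d$ is represented by $(\cdot,\cdot)_{a,b,c}$ because representability of a fixed value $d$ by a convergent sequence of nondegenerate binary quadratic forms over a $p$-adic field is a closed condition (this uses $p$-adic compactness: the representing vectors $v_j$ can be taken in a fixed compact set after scaling, so a subsequence converges to a $v$ with $(v,v)_{a,b,c}=d$).

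The main obstacle I anticipate is the converse's last step: extracting a \emph{convergent} sequence of representing vectors $v_j$. A priori the $v_j$ could run off to infinity (or to the origin), so one must first normalize---e.g. rescale so $v_j$ has a coordinate of $p$-adic absolute value $1$---and then check that the limiting $v$ still satisfies $(v,v)_{a,b,c}=d$ rather than $(v,v)_{a,b,c}=0$; ruling out the degenerate limit uses nondegeneracy of $(\cdot,\cdot)_{a,b,c}$ together with control on how $\Ad(g_j)$ can distort the form. Making precise the relation between $\Ad(g_j)n_d$ and the quadratic form $(\cdot,\cdot)$ on $\span(v_1,v_2)$---i.e. tracking through the isomorphism ``$\Ad(G(F))$-orbit $\leftrightarrow$ isomorphism class of quadratic form'' of \cite[Prop. 5]{Nev11} in a way compatible with taking analytic limits---is the part that will need the most care.
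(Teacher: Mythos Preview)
Your ``if'' direction is essentially the paper's: both construct an explicit one-parameter degeneration of $\Ad(G(F))n_d$ to $e_{a,b,c}$, using the hypothesis to first arrange $a=d$ by a change of basis and then conjugating by a torus element tending to the identity.

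The ``only if'' direction has a genuine gap. You want to find vectors $v_j$ with $(v_j,v_j)_j=d$ for ``the forms attached to $\Ad(g_j)n_d$'' and then pass to a limit, but you never establish that such $v_j$ exist. If the form you mean is $(X,Y)\mapsto\langle X,\Ad(g_j)n_d\,Y\rangle$ restricted to $\span(v_1,v_2)$, then for $j\gg 0$ this form is already isomorphic to $(\cdot,\cdot)_{a,b,c}$ (the isomorphism class of a nondegenerate form being an open condition), so your compactness argument would be superfluous \emph{if} you knew these forms represent $d$---but that is exactly the statement to be proved. The orbit invariant of $n_d$ (the class of $d$ in $F^\times/(F^\times)^2$) says nothing directly about the restriction of the associated form to the \emph{fixed} plane $\span(v_1,v_2)$ after an arbitrary conjugation. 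The obstacle you flag (normalizing $v_j$ to extract a convergent subsequence) is downstream of this; the real missing step is producing the $v_j$ at all.

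The paper's argument (following Djokovi\'c) bypasses this by working with the form $(X,Y)_d:=\langle X,n_d Y\rangle$ on all of $F^4$. This form has rank $3$ with kernel $\span(v_4)$; on $\span(v_1,v_2,v_3)$ it splits as a hyperbolic plane on $\span(v_1,v_3)$ plus the line $\langle d\rangle$ on $\span(v_2)$. The limit hypothesis produces a $2$-plane $W\subset F^4$ (namely $g_i\cdot\span(v_1,v_2)$ for $i\gg 0$) on which $(\cdot,\cdot)_d$ restricts to a form isomorphic to $(\cdot,\cdot)_{a,b,c}$; nondegeneracy forces $W\cap\span(v_4)=0$, so $W$ embeds in $\span(v_1,v_2,v_3)$. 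Witt cancellation applied to $\text{(hyperbolic)}\oplus\langle d\rangle\cong(\cdot,\cdot)_{a,b,c}\oplus(\cdot,\cdot)_d|_{W^\perp}$ then yields $(\cdot,\cdot)_{a,b,c}\cong\langle d\rangle\oplus\langle\ast\rangle$, i.e.\ $(\cdot,\cdot)_{a,b,c}$ represents $d$. The idea you are missing is precisely this passage to the ambient rank-$3$ form together with Witt cancellation.
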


\begin{proof} Suppose $(v,v)_{a,b,c}=d$ for some $v\in\span(v_1,v_2)$. Then with a change of basis we may assume $(v_2,v_2)_{a,b,c}=d$, i.e. $a=d$. We have (with all hidden entries being $0$'s) for $e,f\in F$, $h\in F^{\times}$ that
\[
\matr{1&&&\\-e&1&&\\f&&1&\\&f&e&1}\matr{
	0&&&\\
	1&0&&\\
	&d&0&\\
	&&1&0
}\matr{1&&&\\e&1&&\\-f&&1&\\&-f&-e&1}=\matr{0&&&\\1&0&&\\de&d&0&\\2f+de^2&de&1&0}\]
\[
\matr{h^{-1}&&&\\&1&&\\&&1&\\&&&h}\matr{0&&&\\1&0&&\\de&d&0&\\2f+de^2&de&1&0}\matr{h&&&\\&1&&\\&&1&\\&&&h^{-1}}=\matr{0&&&\\h&0&&\\deh&d&0&\\2fh^2+de^2h^2&deh&h&0}.
\]
For arbitrarily small $h$ we can choose $e,f\in F$ so that $deh=b$, $2fh^2+de^2h^2=c$. Hence the above converges to $e_{a,b,c}$ as desired.
	
Now suppose $e_{a,b,c}$ is in the analytic closure of $\Ad(G(F))n_d$, i.e. there is a sequence $g_i\in G(F)$ such that $\Ad(g_i)^{-1}n_d$ converges to $e_{a,b,c}$. To show that $(\cdot,\cdot)_{a,b,c}$ represents $d$ we follow the method of Djokovi\'{c} \cite[Thm. 6]{Djo81} for real groups. The quadratic form
\[
	(X,Y)_{d,g_i}:=\langle X,\Ad(g_i)^{-1}(n_d)Y\rangle=\langle g_iX,n_dg_iY\rangle
\]
has to converge to $(X,Y)_{a,b,c}$ on $\span(v_1,v_2)$. Since being isomorphic to a non-degenerate quadratic form over $F$ is an open condition in the space of (not necessarily non-degenerate) quadratic forms, for $i\gg 0$ we have that $(\cdot,\cdot)_{d,g_i}|_{\span(v_1,v_2)}\cong(\cdot,\cdot)_{a,b,c}$. In particular, there exists a two-dimensional subspace $W$ in $F^4$ such that the restriction of the form $(X,Y)_d:=\langle X,n_dY\rangle$ is isomorphic to $(X,Y)_{a,b,c}$.

Observe the form $(X,Y)_d$ restricts to a rank $2$ hyperbolic form on $\span(v_1,v_3)$. The orthogonal complement of $\span(v_1,v_3)$ under it is $\span(v_2)\oplus\span(v_4)$, where the form has discriminant $d$ on $\span(v_2)$ and has $\span(v_4)$ in its kernel. The subspace $W$ must not intersect $\span(v_4)$, hence its image to $F^4/\span(v_4)\cong\span(v_1,v_2,v_3)$ is again $2$-dimensional. Denote by $W^{\perp}$ the orthogonal complement of $W$ in $\span(v_1,v_2,v_3)$. Since $(\cdot,\cdot)_d|_W\cong (\cdot,\cdot)_{a,b,c}$. We have that
\[ (\cdot,\cdot)_d|_{\span(v_1,v_3)}\oplus(\cdot,\cdot)_d|_{\span(v_2)}\cong(\cdot,\cdot)_d|_{\span(v_1,v_2,v_3)}\cong(\cdot,\cdot)_{a,b,c}\oplus(\cdot,\cdot)_{d}|_{W^{\perp}}\]
are isomorphic as quadratic spaces. Since $(\cdot,\cdot)_d|_{\span(v_1,v_3)}$ is hyperbolic, it is isomorphic to the direct sum of $(\cdot,\cdot)_{d}|_{W^{\perp}}$ and some other $1$-dimensional quadratic space. By the cancellation theorem of quadratic spaces \cite[pp. 34, Thm. 4]{Ser73}, we then have $(\cdot,\cdot)_{a,b,c}$ is isomorphic to the direct sum of $(\cdot,\cdot)_d|_{\span(v_2)}$ and this $1$-dimensional space, i.e. $(\cdot,\cdot)_{a,b,c}$ represents $d$, as asserted.
\end{proof}

\section{Shalika germs and proof of Theorem \ref{main}}

We normalize our Fourier transforms as
\[
\hat{f}(B):=\int_{\fg(F)}\psi(\Tr(AB))f(A)dA.
\]
where elements in $\fg(F)=\mathfrak{sp}_4(F)$ are identified as $4\times 4$ matrix as usual, i.e. as in (\ref{form1}). It is known (see the main result of \cite{KM03}, or \cite[(6.1)]{Ka15} for a more direct exhibition) that for any $\pi$ in Theorem \ref{main}, on some sufficiently small neighborhood $U$ of $0\in\fg(F)$ we have
\begin{equation}\label{Murnaghan}
(\Theta_{\pi}\circ\log^*)|_U\equiv c\cdot\hat{I}_A|_{U}
\end{equation}
for some $c\in\Q_{>0}$. 

Thanks to that our $p\ge 11$, the hypotheses needed for \cite[Thm 2.1.5]{De02a} are satisfied and it gives the following analogue of (\ref{LCE}), the {\bf Shalika germ expansion}:
\begin{equation}\label{Sha}
I_A(f)=\sum c_{\cO}(A)I_{\cO}(f).
\end{equation}
Here $\cO$ runs over nilpotent $\Ad(G(F))$-orbits in $\fg(F)$ as in (\ref{LCE}), and $f$ has to be a ``depth $-\frac{1}{2}$''-function; a condition that will be automatically met if $\hat{f}$ is supported in a small enough neighborhood.
Comparing (\ref{LCE}), (\ref{Murnaghan}) and (\ref{Sha}), we see that the coefficients in (\ref{LCE}) satisfy $c_{\cO}(\pi)=c\cdot c_{\cO}(A)$. In particular, $c_{\cO}(\pi)\not=0\Leftrightarrow c_{\cO}(A)\not=0$, and we have $\WF^{rat}(\pi)=\max\{\cO\;|\;c_{\cO}(A)\not=0\}$, where the partial order is given by (analytic) closure relation. Fix $\epsilon\in\cO_F^{\times}$ any non-square and
\begin{equation}\label{e}
	e=\matr{0&0&0&0\\0&0&0&0\\0&-\varpi^{-1}\epsilon&0&0\\\epsilon&0&0&0}.
\end{equation}
Theorem \ref{main} now follows from
\begin{proposition}\label{prop:reg} The Shalika germ $c_{\cO}(A)$ is zero for a regular nilpotent orbit $\cO$ iff the closure of $\cO$ contains $e$.
\end{proposition}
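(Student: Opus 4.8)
The plan is to compute all four Shalika germs $c_{n_d}(A)$, $d$ ranging over $F^\times/(F^\times)^2$, directly, exploiting the good reduction of $A$. Concretely $A$ has Moy--Prasad depth $-\tfrac12$ at the Siegel point $x$: writing an element of $\mathfrak{sp}_4$ in $2\times2$ blocks, $A=\left[\begin{smallmatrix}0&\varpi^{-1}\sigma\\\Id&0\end{smallmatrix}\right]$ with $\sigma=\left[\begin{smallmatrix}0&1\\1&0\end{smallmatrix}\right]$, so $A\in\fg(F)_{-1/2}\setminus\fg(F)_0$ and its image $\bar A$ in $\fg(F)_{-1/2}/\fg(F)_0$ is the pair $(\sigma,\Id)$ of split binary quadratic forms — a stable vector for the reductive quotient $\mathsf M=\mathrm{GL}_2$. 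In passing, $A$ is regular semisimple and generates $F(\sqrt\varpi)\times F(\sqrt\varpi)$, which already suggests that the vanishing locus of $d\mapsto c_{n_d}(A)$ should be governed by the norm group of $F(\sqrt\varpi)$ modulo squares.

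First I would invoke the Shalika germ expansion $(\ref{Sha})$ of \cite{De02a} together with homogeneity of nilpotent orbital integrals — available since $p\geq11$ — to reduce each $c_{n_d}(A)$, up to a nonzero constant, to a Gauss-type exponential sum over $\F_q$: schematically $\sum\psi_q(\langle\bar A,\bar Y\rangle)$, where $\psi_q$ is the character of $\F_q$ induced by $\psi$ and $\bar Y$ runs over the $\F_q$-points of an explicit variety attached to the regular nilpotent orbit $n_d$ and the point $x$. Making this variety explicit is the technical heart: a regular nilpotent of $\mathfrak{sp}_4$ with invariant $d$, in good position at $x$, carries in its leading term a nondegenerate binary quadratic form over $\F_q$, and $d$ is recovered from that form (and an auxiliary linear coordinate) modulo squares. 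For $d\equiv1,\epsilon$ this analysis takes place directly at depth $-\tfrac12$; for $d\equiv\varpi,\varpi\epsilon$ an extra twist by $\varpi$ intervenes (equivalently, one runs the analysis for $\varpi^{-1}A$), which is exactly the source of the $\varpi$ in the answer; these two cases require a separate but parallel computation.

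Then I would evaluate the sums. Since both components of $\bar A$ are nondegenerate forms, each sum is a (possibly twisted) Gauss sum whose nonvanishing reduces to a quadratic-residue condition, and one finds that $c_{n_d}(A)=0$ precisely when $d$ lies in the nontrivial coset $\{\epsilon,\varpi\epsilon\}$ of the norm group $\{1,\varpi\}$ of $F(\sqrt\varpi)$ modulo squares. To match this with the proposition, I would compute $(\cdot,\cdot)_e$ of $(\ref{eq:quadform})$: for $e=e_{-\varpi^{-1}\epsilon,\,0,\,\epsilon}$ the Gram matrix is $\mathrm{diag}(\epsilon,-\varpi^{-1}\epsilon)$, so $(\cdot,\cdot)_e$ is isometric to $\epsilon\langle1,\varpi\rangle$ (using that $-1\in(F^\times)^2$ since $q\equiv1\bmod4$), and $\langle1,\varpi\rangle$ represents exactly the classes $1$ and $\varpi$; hence $(\cdot,\cdot)_e$ represents $d$ iff $d\equiv\epsilon$ or $\varpi\epsilon$. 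By Lemma \ref{lem:quad} this is precisely the condition $e\in\overline{\Ad(G(F))n_d}$, so $c_{n_d}(A)=0$ iff $e$ lies in the analytic closure of $\Ad(G(F))n_d$, as claimed. (The same computation, applied to the subregular orbits, shows $c_{\Ad(G(F))e}(A)\neq0$, which together with the above yields the full statement of Theorem \ref{main}.)

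The step I expect to be the main obstacle is the reduction: pinning down the exact variety and the normalizing constant in the residue-field formula for $c_{n_d}(A)$, and then — for the implication $e\notin\overline{\Ad(G(F))n_d}\Rightarrow c_{n_d}(A)\neq0$ — showing that the resulting Gauss sum does not accidentally cancel, while treating the ``ramified-invariant'' orbits $n_\varpi,n_{\varpi\epsilon}$ on the same footing as $n_1,n_\epsilon$.
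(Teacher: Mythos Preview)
Your endgame is correct: $(\cdot,\cdot)_e$ has Gram matrix $\mathrm{diag}(\epsilon,-\varpi^{-1}\epsilon)\cong\epsilon\langle 1,\varpi\rangle$, which represents exactly the classes $\epsilon,\varpi\epsilon\in F^\times/(F^\times)^2$, so by Lemma~\ref{lem:quad} the two regular orbits whose closure contains $e$ are those of $n_\epsilon$ and $n_{\varpi\epsilon}$, in agreement with the paper. But the substance of the proposition is the determination of $\{d:c_{n_d}(A)=0\}$, and here your proposal is only a plan. You assert that each $c_{n_d}(A)$ reduces, up to a nonzero constant, to a Gauss-type sum $\sum\psi_q(\langle\bar A,\bar Y\rangle)$ over an explicit $\F_q$-variety and then vanishes precisely for $d\in\{\epsilon,\varpi\epsilon\}$; but neither the variety nor the evaluation is written down, and you yourself flag the reduction as ``the main obstacle.'' For regular nilpotent orbits this is a real concern: the regular germ is, after normalization, $\{0,1\}$-valued --- it records whether $\Ad(G(F))A$ meets a Kostant slice --- so it is an incidence condition rather than a character sum, and your ``twist by $\varpi$'' to handle $d\equiv\varpi,\varpi\epsilon$ is only gestured at.

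The paper takes a quite different and shorter route. It invokes Shelstad \cite{Sh89} together with Kottwitz \cite[Thm.~5.1]{Kot99}: for regular $\cO$ one has $c_{\cO}(A)\neq 0$ iff $\Ad(G(F))A$ meets the Kostant section attached to $\cO$, and exactly one rational orbit in the stable class of $A$ meets each such section. Two explicit matrix conjugations place $A$ on the Kostant sections for $n_{-\varpi^{-1}}$ and $n_{i/2}$; since $-\varpi^{-1}\equiv\varpi$ and $i/2=((1+i)/2)^2\equiv 1$ in $F^\times/(F^\times)^2$, these are exactly the two regular orbits whose closure avoids $e$. For the vanishing at the remaining two orbits the paper uses the outer conjugation by $d=\mathrm{diag}(\sqrt\epsilon^{\,-1},\sqrt\epsilon^{\,-1},\sqrt\epsilon,\sqrt\epsilon)\in\PSp_4(F)$, which swaps the two pairs of regular orbits, combined with a one-line computation in $H^1(F,Z_G(A))$ showing $\Ad(d)A\notin\Ad(G(F))A$. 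No exponential sums enter, and there is no case split in $d$.

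In short, you have the right target and a plausible alternative strategy, but as written it is not a proof: the step you call the technical heart is absent. Your parenthetical that ``the same computation'' also yields $c_{\Ad(G(F))e}(A)\neq 0$ is a further overreach; the paper treats that separately (Proposition~\ref{prop:sub}) via a tailored test function and a genus-one point count over $\F_q$.
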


\begin{proposition}\label{prop:sub} The Shalika germ $c_{\cO}(A)$ is non-zero for the subregular nilpotent orbit $\cO=\Ad(G(F))e$.
\end{proposition}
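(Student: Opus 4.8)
The plan is to compute the Shalika germ $c_{\cO}(A)$ for the subregular orbit $\cO = \Ad(G(F))e$ directly from its defining property in the germ expansion \eqref{Sha}, by choosing a well-adapted test function $f$. The standard technique (going back to Shalika, and used systematically by DeBacker in \cite{De02a} and by the author in \cite{Tsa23b}) is to pick $f$ to be (a scalar multiple of) the characteristic function of a small translate of a Moy--Prasad lattice, chosen so that: (i) the left-hand side $I_A(f)$ is a manifestly nonzero orbital integral, computable because the intersection $\Ad(G(F))A \cap \supp f$ is controlled; and (ii) on the right-hand side, all nilpotent orbits $\cO'$ with $c_{\cO'}(A) \neq 0$ other than possibly $\cO$ itself contribute zero to $I_{\cO'}(f)$, because their closures miss $\supp f$ or because of a homogeneity/support argument isolating the subregular contribution. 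Given the setup in the paper, the natural $f$ to try is one whose Fourier transform $\hat f$ is supported near $e$ (or near a rescaling $\varpi^{2r} e$), so that the "depth $-\tfrac12$" hypothesis for \eqref{Sha} holds automatically as the paper notes.

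The key steps, in order, would be: First, identify explicitly which nilpotent orbits $\cO'$ can have $c_{\cO'}(A) \neq 0$ --- by Proposition \ref{prop:reg} the two regular orbits whose closures do not contain $e$ contribute, the zero orbit contributes (with germ $\hat I_A(0)$-type normalization), and the question is the subregular orbits; the two subregular orbits {\it not} equal to $\Ad(G(F))e$ have closures not containing $e$, so by the logic parallel to Proposition \ref{prop:reg} one expects those germs to vanish too, leaving $\cO = \Ad(G(F))e$ as the unique subregular candidate. Second, use homogeneity of the Shalika germs under the $F^\times$-scaling action (DeBacker's homogeneity, \cite[Thm 2.1.5]{De02a}): $c_{\cO'}(\lambda^2 A)$ scales by $|\lambda|^{-2\dim\cO'}$ times $c_{\cO'}(A)$ up to the module, so by choosing the scaling of $A$ and a matching lattice for $f$ one separates the subregular term (dimension $6$) from the regular terms (dimension $8$) and the trivial term. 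Third, evaluate $I_A(f)$ for this $f$: since $A$ is (after the paper's normalizations) the semisimple-looking element in \eqref{A}, this is a semisimple orbital integral over a lattice, and one computes it by decomposing $\supp f \cap \Ad(G(F))A$ into $G(F)_x$-orbits via a Moy--Prasad/building argument, getting a finite nonzero count of double cosets times volumes. Fourth, evaluate $I_e(f)$, the nilpotent orbital integral of the subregular $e$ against the same $f$, showing it is nonzero --- this is a geometric computation on the subregular orbit, again organized by the Siegel parahoric filtration \eqref{MP}, and one checks the intersection of $\Ad(G(F))e$ with $\supp f$ is nonempty and the integral converges to a positive number. Combining, $c_{\cO}(A) = I_A(f) / I_e(f) \neq 0$ after subtracting the (controlled, possibly zero) contributions of the other orbits.

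The main obstacle I expect is Step four together with the isolation in Step two: nilpotent orbital integrals are only conditionally convergent and their evaluation against an explicit lattice function requires care (one typically regularizes via $I_e(f) = \lim_{\text{shrinking}} $ of honest integrals, or uses the fact that $e$ lies in the right Moy--Prasad filtration piece so that $f$ pairs against it transparently). Equally delicate is ensuring that the {\it only} subregular orbit contributing is $\Ad(G(F))e$ and not the other two --- this needs either an analogue of Lemma \ref{lem:quad} classifying which subregular orbits lie in the analytic closure of the regular ones relevant here, or a direct support argument showing $\supp f$ misses the other two subregular orbits. A clean way to sidestep part of this is to argue by a parity/positivity argument: all $c_{\cO'}(A)$ for $\cO'$ of a fixed dimension contribute with a definite sign (orbital integrals of $A$ and $I_{\cO'}$ being positive measures forces $c_{\cO'}(A) \geq 0$ in the appropriate normalization, cf. the rationality $c \in \Q_{>0}$ in \eqref{Murnaghan}), so no cancellation can occur, and hence it suffices to exhibit {\it one} $f$ with $I_A(f) \neq 0$ but with all regular and trivial contributions killed by the scaling of Step two --- then the leftover is a nonnegative combination of subregular germs that must be nonzero, and since only $\Ad(G(F))e$ is in play, $c_{\Ad(G(F))e}(A) \neq 0$.
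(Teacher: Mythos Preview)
Your overall strategy---pick a test function $f$ that isolates $\cO = \Ad(G(F))e$ in the germ expansion \eqref{Sha}, then verify $I_A(f) \neq 0$---is the same as the paper's. But your proposed isolation mechanism has two genuine gaps.

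First, there is no ``logic parallel to Proposition \ref{prop:reg}'' available for subregular orbits: the result of Shelstad \cite{Sh89} relating regular Shalika germs to Kostant sections is specific to \emph{regular} nilpotent orbits, and gives you no information about which subregular germs vanish. (There are also considerably more than three subregular orbits for $\Sp_4$ over $F$.) Second, the positivity claim $c_{\cO'}(A) \geq 0$ is false in general: Shalika germs can take negative values, and the constant $c\in\Q_{>0}$ in \eqref{Murnaghan} has nothing to do with the signs of the individual $c_{\cO'}(A)$. So you cannot rule out cancellation among subregular contributions by sign alone. Your homogeneity/scaling idea could in principle separate orbits by dimension, but it cannot separate the several subregular orbits from one another, so the gap persists.

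The paper sidesteps both issues by a sharper choice of $f$. It takes $f$ to be the characteristic function of an explicit coset of a depth $-\tfrac{1}{2}$ Moy--Prasad lattice, chosen so that a direct scaling-and-limit argument (Lemma \ref{nil}) yields: whenever $I_{\cO'}(f) \neq 0$ for a nilpotent orbit $\cO'$, the element $e$ lies in the analytic closure of $\cO'$. This single support condition already kills every other subregular orbit (distinct orbits of the same dimension are not in one another's closure) and every smaller orbit, leaving only $\Ad(G(F))e$ itself and the two regular orbits whose closure contains $e$. But Proposition \ref{prop:reg} says precisely that those two regular orbits have $c_{\cO'}(A)=0$. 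Hence \eqref{Sha} applied to this $f$ collapses to a single term, and $I_A(f)\neq 0$ forces $c_{\Ad(G(F))e}(A)\neq 0$---with no need to compute $I_e(f)$ at all, contrary to your Step four. The nonvanishing $I_A(f)\neq 0$ (Lemma \ref{rs}) is shown by exhibiting explicit $g\in G(F)$ with $\Ad(g)A\in\supp f$; after an elementary matrix calculation this reduces to finding an $\F_q$-point on a genus-one curve, which always exists.
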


\begin{remark} It might look like there are smart choices behind $e$ and $A$. In fact, a random choice of $A$ has about $\frac{1}{2}$ probability to work; it secretly needs certain invariant in $\cO_F^{\times}$ to be a square. Once that is met, Proposition \ref{prop:sub} will work for any such $A$ and some $e$ it picks out. Our choice merely gives nicer matrix calculation. The assumption $q\equiv1(\operatorname{mod}4)$ is also taken to simplify the exposition and is not essentially needed.
\end{remark}

The rest of the section is devoted to the proofs of Proposition \ref{prop:reg} and \ref{prop:sub}.

\begin{proof}[Proof of Proposition \ref{prop:reg}]
A result of Shelstad \cite{Sh89}, combined with another by Kottwitz \cite[Thm. 5.1]{Kot99} (we thank Alexander Bertoloni Meli for clarifying this), showed that for a regular nilpotent orbit $\cO$, $c_{\cO}(A)=0$ iff $\Ad(G(F))A$ does not meet the Kostant section associated to any element in $\cO$. The theory of Kostant section also tells that for any fixed regular $\cO$, among the stable orbit of $A$ there is exactly one rational orbit that meets the Kostant section. We have
\[
\Ad(\matr{1&0&0&0\\0&0&1&0\\0&-1&0&0\\0&0&0&1})A=\matr{0&0&0&\varpi^{-1}\\1&0&0&0\\0&-\varpi^{-1}&0&0\\0&0&-1&0}.
\]
is in the Kostant section for $n_{-\varpi^{-1}}$. Since $q\equiv 1(\operatorname{mod}4)$, we may fix $i:=\sqrt{-1}$ a square root of $-1$ in $\cO_F$. We have
\[
\Ad(\matr{1&i&0&0\\1&-i&0&0\\0&0&i/2&1/2\\0&0&-i/2&1/2})A=\matr{0&0&2\varpi^{-1}&0\\0&0&0&2\varpi^{-1}\\0&i/2&0&0\\-i/2&0&0&0}.
\]
Hence
\[
\Ad(\matr{2\varpi^{-1}&0&0&0\\0&1&0&0\\0&0&1&0\\0&0&0&\varpi/2}\matr{0&0&0&1\\0&1&0&0\\0&0&1&0\\-1&0&0&0}\cdot\matr{1&i&0&0\\1&-i&0&0\\0&0&i/2&1/2\\0&0&-i/2&1/2})A=\matr{0&0&0&2\varpi^{-2}i\\1&0&0&0\\0&i/2&0&0\\0&0&1&0}.
\]
is in the Kostant section for $n_{i/2}$. We note that both $-1$ and $i/2$ are squares in $\cO_F^{\times}$, and thus Lemma \ref{lem:quad} shows that $n_{-\varpi^{-1}}$ and $n_{i/2}$ are exactly the two regular nilpotent orbits whose closure does not contain $e$. This shows that if $c_{\cO}(A)=0$ for a regular nilpotent $\cO$, then the closure of $\cO$ must contain $e$. It remains to show that for any regular nilpotent orbit $\cO$ different from that of $n_{-\varpi^{-1}}$ and $n_{i/2}$, we have $c_{\cO}(A)=0$.

Let $\sqrt{\epsilon}$ be a square root of $\epsilon$ in an unramified quadratic extension of $F$. The element $d:=\matr{
\sqrt{\epsilon}^{-1}&0&0&0\\
0&\sqrt{\epsilon}^{-1}&0&0\\
0&0&\sqrt{\epsilon}&0\\
0&0&0&\sqrt{\epsilon}
}$ has image in $G_{ad}(F)=\PSp_4(F)$. Since the orbit of $A$ meets the Kostant section for $n_{-\varpi^{-1}}$ and $n_{i/2}$, the orbit of $\Ad(d)A$ meets the Kostant section of $\Ad(d)n_{-\varpi^{-1}}$ and $\Ad(d)n_{i/2}$. As $\Ad(d)n_{-\varpi^{-1}}=n_{-\epsilon \varpi^{-1}}$ and $\Ad(d)n_{i/2}=n_{\epsilon i/2}$ are the other two regular nilpotent orbits, using results of Shelstad and Kottwitz and the classical result that a Kostant section meets an $\Ad(G(\bar{F}))$-orbit at one point, it remains to prove that $\Ad(d)A$ and $A$ live in different $\Ad(G(F))$-orbits. The element $d$ defines a class $\alpha_d\in Z^1(F,Z(G))$ and the assertion that $\Ad(d)A$ and $A$ live in different $\Ad(G(F))$-orbits is equivalent to that the image of $\alpha_d$ in $H^1(F,Z_G(A))$ is non-trivial. Observe that $\alpha_d$ is trivial on inertia and sends $\Frob$ to $-1\in\mu_2=Z(G)$. Since $Z_G(A)$ is anisotropic over the maximal unramified extension of $F$, the image of $\alpha_d$ is non-trivial in $H^1(F,Z_G(A))=H^1(\Frob,X_*(Z_G(A))_{I_F})=H^1(\Frob,\mu_2^2)$, as claimed.
\end{proof}

\begin{proof}[Proof of Proposition \ref{prop:sub}]
Consider the characteristic function of the set
\[
\{X\in\fg(F)\;|\;X\text{ is of the form }
\matr{
\fm^0&\fm^0&\fm^{-1}&\fm^{-1}\\
\fm^0&\fm^0&\varpi^{-1}\epsilon+\fm^0&\fm^{-1}\\
\fm^0&\fm^0&\fm^0&\fm^0\\
\epsilon+\fm&\fm^0&\fm^0&\fm^0
}
\}
\]
Call this function $f$. It has the property that $f(X+Y)=f(X)$ whenever $Y$ is of the form
\[
\matr{
	\fm^0&\fm^0&\fm^{-1}&\fm^{-1}\\
	\fm^0&\fm^0&\fm^0&\fm^{-1}\\
	\fm^0&\fm^0&\fm^0&\fm^0\\
	\fm&\fm^0&\fm^0&\fm^0\\
}
\]
The set of elements of the above form is a Moy-Prasad lattice of depth $-\frac{1}{2}$. Since $A$ is of depth $-\frac{1}{2}$, \cite[Thm 2.1.5]{De02a} (or its application to Conj. 2 {\it op. cit}) shows that (\ref{Sha}) holds for $f$. Let $e$ be as in (\ref{e}). We claim that
\begin{lemma}\label{nil} Suppose $I_{\cO}(f)\not=0$ for a nilpotent $\Ad(G(F))$-orbit $\cO$. Then $e$ lies in the closure of $\cO$. 
\end{lemma}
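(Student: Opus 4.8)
The plan is to reduce the statement to a question about which nilpotent orbits meet $\supp(f)$, and then to classify those orbits. Since $f=\mathbf{1}_S$ with $S:=\supp(f)$ an open subset of $\fg(F)$ (a lattice coset), and every nilpotent $\Ad(G(F))$-orbit is a locally closed submanifold of $\fg(F)$ carrying a positive $G(F)$-invariant measure, $I_{\cO}(f)=\operatorname{vol}(\cO\cap S)$ is nonzero precisely when $\cO\cap S\neq\varnothing$. So it suffices to show: if $X\in S$ is nilpotent and $\cO:=\Ad(G(F))X$ is its orbit, then $e$ lies in the analytic closure of $\cO$.

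First I would determine the Jordan type of such an $X$. Writing $X=(X_{ij})$ in the matrix model of \eqref{form1}, the defining relations of $\mathfrak{sp}_4$ give $X_{43}=-X_{21}$, so the submatrix of $X$ on rows $\{2,4\}$ and columns $\{1,3\}$ equals $\matr{X_{21}&X_{23}\\ X_{41}&-X_{21}}$, with determinant $-X_{21}^{2}-X_{23}X_{41}$; as $X_{21}\in\cO_F$ while $X_{23}\in\varpi^{-1}\epsilon+\cO_F$ and $X_{41}\in\cO_F^{\times}$, this determinant has valuation exactly $-1$, hence is nonzero, and $\rank X\ge 2$. Since the nilpotent orbits of $\Sp_4$ are $[1^4],[2,1^2],[2^2],[4]$, of ranks $0,1,2,3$ respectively, $\cO$ is either subregular $[2^2]$ or regular $[4]$. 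I also record, as in the proof of Proposition~\ref{prop:reg} and using $q\equiv1\bmod 4$, that $e=e_{-\varpi^{-1}\epsilon,0,\epsilon}$ carries the quadratic form $\langle\epsilon,-\varpi^{-1}\epsilon\rangle\cong\langle\epsilon,\varpi\epsilon\rangle$ of \eqref{eq:quadform}, whose set of represented classes in $F^{\times}/(F^{\times})^{2}$ is exactly $\{\epsilon,\varpi\epsilon\}$; hence, by Lemma~\ref{lem:quad} and the incomparability of distinct $[2^2]$-orbits, the nilpotent orbits whose analytic closure contains $e$ are $\Ad(G(F))e$ together with the two regular orbits $\Ad(G(F))n_d$, $d\in\{\epsilon,\varpi\epsilon\}$.

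It then remains to compute the refining invariant of $\cO$ in each case. If $\cO$ is of type $[2^2]$, then $\ker X=\im X$ is Lagrangian and $\cO$ is determined by the non-degenerate quadratic form $q_X$ on $F^4/\im X$ given by $q_X(\bar v)=\langle v,Xv\rangle$, which coincides with the form \eqref{eq:quadform} carried by any $e_{a,b,c}$-representative of $\cO$. Locating $\im X$ (scaling $X$ by $\varpi$ and reducing modulo $\fm$ exhibits it as a deformation of $\span(v_1,v_2)$), and using $X_{41}\equiv\epsilon\bmod\fm$ together with the nilpotency equations $\Tr(X^2)=0$ and $\det X=0$, I would compute the discriminant and a represented class of $q_X$ and verify that these are those of the form attached to $e$; by the classification of binary forms over $F$ (with $-1$ a square) this forces $q_X\cong\langle\epsilon,\varpi\epsilon\rangle$, i.e. $\cO=\Ad(G(F))e$, and the claim is immediate. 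If instead $\cO$ is of type $[4]$, write $\cO=\Ad(G(F))n_{d_X}$; by Lemma~\ref{lem:quad}, $e\in\overline{\cO}$ if and only if $\langle\epsilon,\varpi\epsilon\rangle$ represents $d_X$, i.e. iff $d_X\in\{\epsilon,\varpi\epsilon\}$. Here $d_X$ is the class of $\langle v_0,X^3v_0\rangle$ for any $v_0$ with $X^3v_0\neq 0$ (the sign appearing in the model $n_d$ being immaterial, as $-1$ is a square); again using $X_{41}\equiv\epsilon$ and $\Tr(X^2)=\det X=0$, I would check that this class has nonsquare unit part, placing $d_X$ in $\{\epsilon,\varpi\epsilon\}$.

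The main obstacle is this last step. The ``leading'' part of $X\in\supp(f)$ is not literally a conjugate of $e$: the block $\matr{X_{13}&X_{14}\\ X_{23}&X_{13}}$ carries entries of depth $-1$ besides the distinguished entry $X_{23}\equiv\varpi^{-1}\epsilon$, so a priori the quadratic invariant of $\cO$ could depend on these (and on the integral entries), and the substance is to verify that once the two nilpotency equations are imposed it is forced to agree with the invariant attached to $e$. This is exactly where the congruence $X_{41}\equiv\epsilon\bmod\fm$ and the relations $\Tr(X^2)=\det X=0$ enter essentially; it is also where the arithmetic hypotheses matter (cf. the Remark after Proposition~\ref{prop:sub}: a certain unit invariant built from $A$ must be a square, which the choice of $A$ and $q\equiv1\bmod 4$ guarantee). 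One should in addition double-check the elementary fact used above that $\langle\epsilon,\varpi\epsilon\rangle$ represents precisely the classes $\epsilon$ and $\varpi\epsilon$.
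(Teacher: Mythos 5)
Your proposal takes a genuinely different route from the paper: you classify the nilpotent orbits that can meet $\supp(f)$ directly (first by rank, then by computing the quadratic invariant of $\cO$), whereas the paper proceeds by a degeneration argument. Namely, the paper conjugates $\supp(f)$ by a Weyl element $w$ that moves the distinguished entry $\varpi^{-1}\epsilon+\fm^0$ into the lower-left block, and then applies $\varpi^{2n}\Ad(\operatorname{diag}(\varpi^n,\varpi^n,\varpi^{-n},\varpi^{-n}))$ --- which preserves nilpotent orbits since $\varpi^{2n}$ is a square --- to any $X\in\cO\cap\Ad(w)\supp(f)$. As $n\to\infty$ this produces a sequence in $\cO$ whose subsequential limit is strictly lower-left block triangular with bottom-left block cutting out a form isomorphic to $(\cdot,\cdot)_e$, hence lies in $\Ad(G(F))e$ --- so $e\in\overline{\cO}$ with no need to identify $\cO$ at all.

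Your opening reductions are correct: $I_{\cO}(f)\neq 0$ iff $\cO\cap\supp(f)\neq\emptyset$; the symplectic relation $X_{43}=-X_{21}$ is right for the form \eqref{form1}; the valuation count showing the $2\times 2$ minor on rows $\{2,4\}$, columns $\{1,3\}$ has valuation $-1$ is correct, so $\rank X\ge 2$ and $\cO$ is $[2^2]$ or $[4]$; and the bookkeeping that $e$ carries the form $\langle\epsilon,-\varpi^{-1}\epsilon\rangle\cong\langle\epsilon,\varpi\epsilon\rangle$, representing exactly $\{\epsilon,\varpi\epsilon\}$ in $F^{\times}/(F^{\times})^2$, is also right. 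However, the crux of your argument --- computing the invariant $q_X$ (in the $[2^2]$ case) or $d_X$ (in the $[4]$ case) from the constraints defining $\supp(f)$ together with $\Tr(X^2)=\det X=0$, and showing it lies in the required class --- is not carried out, only outlined. You identify this yourself as ``the main obstacle,'' and it is: $\im X$ is not literally $\span(v_1,v_2)$ because of the depth $-1$ entries $X_{13},X_{14}$, so locating the Lagrangian and evaluating the form on the quotient is a genuinely nontrivial calculation across all free entries of $X$, not a one-line congruence. As written, the proposal is a plausible plan with the decisive step missing, while the paper's contraction argument sidesteps that computation entirely. If you do want to pursue your route, I would suggest working with the conjugate $\Ad(w)X$ (as the paper does) so that the leading block sits in the lower-left corner, which makes the Lagrangian $\im X$ easier to track.
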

\begin{lemma}\label{rs} $I_A(f)\not=0$.
\end{lemma}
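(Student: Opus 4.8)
The plan is to reduce the non-vanishing of $I_A(f)$ to the statement that the $\Ad(G(F))$-orbit of $A$ meets $\supp f$, and then to verify that statement by an explicit conjugation.

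First I would note that $\supp f$ is defined by finitely many congruence conditions, each of the form ``the $(i,j)$-entry lies in $\varpi^k\cO_F$'' or ``the $(i,j)$-entry lies in $c+\varpi^k\cO_F$''; hence $\supp f$ is open and compact in $\fg(F)$ and $f\in C_c^\infty(\fg(F))$. Since $p$ is odd, the characteristic polynomial $t^4-\varpi^{-2}$ of $A$ is separable, so $A$ is regular semisimple, its orbit $\cO_A:=\Ad(G(F))A$ is closed in $\fg(F)$, and the orbit map gives a homeomorphism $G(F)/Z_G(A)(F)\cong\cO_A$ (here $Z_G(A)$ is in fact an anisotropic torus, so $Z_G(A)(F)$ is compact and the orbit map is proper). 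The distribution $I_A$ is integration over $\cO_A$ against a $G(F)$-invariant positive measure; since $f\ge 0$ and $\supp f$ is open, $\cO_A\cap\supp f$ is an open subset of $\cO_A$, and a nonempty open subset of $\cO_A$ has positive measure. Therefore $I_A(f)\neq 0$ if and only if some $\Ad(G(F))$-conjugate of $A$ lies in $\supp f$.

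It then remains to produce $g\in G(F)$ with $\Ad(g)A\in\supp f$. A direct inspection shows that $A$ itself already meets every condition defining $\supp f$ except in the $(2,3)$- and $(4,1)$-entries, so the goal is to conjugate $A$ to $e$ plus a controlled regular-semisimple perturbation. I would take $g$ of the shape (unipotent)$\cdot$(split torus), in the spirit of the elements appearing in the displayed computations for Lemma \ref{lem:quad} and Proposition \ref{prop:reg}: conjugating first by $I+tN$ with $N:=E_{21}-E_{43}\in\mathfrak{sp}_4(F)$ produces a nonzero $(4,1)$-entry — the relevant commutator entry equals $-2N_{21}$, and it is automatically nonzero because membership in $\mathfrak{sp}_4$ forces the $(2,1)$- and $(4,3)$-entries of any element to be negatives of one another — together with a $(2,3)$-entry of the form $(1+t^2)\varpi^{-1}$ and new $(1,3)$-, $(2,4)$-entries of size $\varpi^{-1}$; a further conjugation by $\operatorname{diag}(\lambda_1,\lambda_2,\lambda_2^{-1},\lambda_1^{-1})$ then rescales all entries. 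Keeping track of the $\mathfrak{sp}_4$-relations, one finds that all of the congruences defining $\supp f$ hold as soon as $\bar\lambda_1,\bar\lambda_2,\bar t\in\F_q^\times$ satisfy a pair of equations modulo $\varpi$; the hypotheses $q\equiv 1\ (\operatorname{mod}4)$, $p\ge 11$, and the freedom to choose the non-square $\epsilon$ in (\ref{e}) are exactly what make this solvable (this is the ``certain invariant in $\cO_F^\times$ being a square'' referred to in the Remark above). Writing down such a $g$ and substituting exhibits a conjugate of $A$ in $\supp f$, which by the previous paragraph completes the proof.

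The main obstacle is this last step. Because $\cO_A$ is a ``thin'' subvariety of $\fg(F)$ cut out by the invariants $\Tr(X^2)=0$, $\det X=-\varpi^{-2}$ together with the cohomological invariant distinguishing $\cO_A$ from the other rational orbits in its stable orbit, the entries of $\Ad(g)A$ cannot be prescribed independently; for instance, conjugation of $A$ by the diagonal torus alone leaves the $(4,1)$-entry equal to $0$, so one genuinely needs a $g$ with a unipotent part, and then must check that the resulting quadratic conditions modulo $\varpi$ admit a solution for an admissible $\epsilon$. Once the correct $g$ has been found, the remaining verification is a routine matrix computation. (Alternatively, one could read off $I_A(f)\neq 0$ from De Backer's reduction of such depth $-\frac{1}{2}$ orbital integrals to a point-count over the reductive quotient $\mathrm{GL}_2(\F_q)$ of the Siegel parahoric, applied to the image of $A$ in $\fg_{-1/2}/\fg_0$; but the explicit-conjugate route minimizes the bookkeeping.)
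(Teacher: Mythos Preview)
Your strategy matches the paper's exactly: reduce $I_A(f)\neq 0$ to $\cO_A\cap\supp f\neq\emptyset$, then conjugate $A$ by $g=\operatorname{diag}(\lambda_1,\lambda_2,\lambda_2^{-1},\lambda_1^{-1})\cdot(I+t(E_{21}-E_{43}))$ and read off the conditions on $(\bar\lambda_1,\bar\lambda_2,\bar t)$ in $\F_q$. The paper does precisely this (with $x=\lambda_1$, $y^{-1}=\lambda_2$, $z=t$).

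The gap is in the final solvability claim. The two conditions you obtain are, in the paper's variables, $-2\bar x^{-2}\bar z=\bar\epsilon$ and $\bar y^{-2}(1+\bar z^2)=\bar\epsilon$; eliminating $\bar z$ gives the affine curve
\[
E:\quad \bar\epsilon\,\bar y^2=\tfrac{\bar\epsilon^2}{4}\bar x^4+1,\qquad (\bar x,\bar y)\in\mathbb{G}_m^2/_{\F_q}.
\]
This is a genus $1$ curve, and its having an $\F_q$-point is not automatic from $q\equiv 1\pmod 4$ or $p\ge 11$, nor from any ``freedom to choose $\epsilon$'' --- $\epsilon$ is fixed in advance as an \emph{arbitrary} non-square, and the lemma must hold for every such choice. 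The paper's actual argument is: (i) the smooth completion $E^c$ is a projective genus $1$ curve, hence $E^c(\F_q)\neq\emptyset$ by the Hasse bound; (ii) the eight geometric points of $E^c\setminus E$ (two over $\bar x=0$, four over $\bar y=0$, two at infinity) are all irrational over $\F_q$ precisely because $\bar\epsilon$ is a non-square, so $E(\F_q)=E^c(\F_q)\neq\emptyset$. You have also misread the Remark: the ``invariant in $\cO_F^\times$ being a square'' is a condition on the choice of $A$, not on $\epsilon$; for the paper's specific $A$ it is satisfied, and then the lemma holds for every non-square $\epsilon$. So the one piece of genuine content --- the point-count on $E^c$ together with the rationality check at the boundary --- is exactly what your proposal leaves as ``routine matrix computation''.
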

With both lemmas, (\ref{Sha}) gives
$\sum_{\cO}c_{\cO}(A)I_{\cO}(f)=I_A(f)\not=0$. By Proposition \ref{prop:reg} and Lemma \ref{nil}, the only nilpotent orbit $\cO$ that can contribute to the sum is $\cO=\Ad(G(F))e$, which proves Proposition \ref{prop:sub}.
\end{proof}

\begin{proof}[Proof of Lemma \ref{nil}] We have
\[
\text{For }w=\matr{1&0&0&0\\0&0&-1&0\\0&1&0&0\\0&0&0&1},\;\Ad(w)\supp(f)=
\matr{
	\fm^0&\fm^0&\fm^{-1}&\fm^{-1}\\
	\fm^0&\fm^0&\fm^0&\fm^{-1}\\
	\fm^0&-\varpi^{-1}\epsilon+\fm^0&\fm^0&\fm^0\\
	\epsilon+\fm&\fm^0&\fm^0&\fm^0
}.
\]
For any $X\in\cO\cap\Ad(w)\supp(f)$, we observe that
\[
\varpi^{2n}\Ad(\matr{\varpi^n&0&0&0\\0&\varpi^n&0&0\\0&0&\varpi^{-n}&0\\0&0&0&\varpi^{-n}})X\in\cO\cap
\matr{
	\fm^{2n}&\fm^{2n}&\fm^{4n-1}&\fm^{4n-1}\\
	\fm^{2n}&\fm^{2n}&\fm^{4n}&\fm^{4n-1}\\
	\fm^0&-\varpi^{-1}\epsilon+\fm^0&\fm^{2n}&\fm^{2n}\\
	\epsilon+\fm&\fm^0&\fm^{2n}&\fm^{2n}
}
\]
As $n$ goes to $+\infty$, such elements converge (or a subsequence does) to an element
\[
e'\in
\matr{
	0&0&0&0\\
	0&0&0&0\\
	\fm^0&-\varpi^{-1}\epsilon+\fm^0&0&0\\
	\epsilon+\fm&\fm^0&0&0
}.
\]
This element $e'$ lives in the same $\Ad(G(F))$-orbit as $e$ because the bottom-left $2\times 2$ matrix defines an isomorphic quadratic form. This proves the lemma.
\end{proof} 

\begin{proof}[Proof of Lemma \ref{rs}]
We look at
\[
\Ad(\matr{1&0&0&0\\z&1&0&0\\0&0&1&0\\0&0&-z&1})A=
\matr{
	0&0&\varpi^{-1}z&\varpi^{-1}\\
	0&0&\varpi^{-1}(1+z^2)&\varpi^{-1}z\\
	1&0&0&0\\
	-2z&1&0&0
}
\]
and
\[
\Ad(\matr{x&0&0&0\\0&y^{-1}&0&0\\0&0&y&0\\0&0&0&x^{-1}}\cdot\matr{1&0&0&0\\z&1&0&0\\0&0&1&0\\0&0&-z&1})A=
\matr{
	0&0&\varpi^{-1}xy^{-1}z&\varpi^{-1}x^2\\
	0&0&\varpi^{-1}y^{-2}(1+z^2)&\varpi^{-1}xy^{-1}z\\
	x^{-1}y&0&0&0\\
	-2x^{-2}z&x^{-1}y&0&0
}
\]
Suppose $x,y\in\cO_F^{\times}$ and $z\in\cO_F$. Denote by $\bar{x},\bar{y},\bar{z},\bar{\epsilon}\in\F_q$ the respective reductions. The right hand side of the last equation lies in the support of $f$ iff
\[
\left\{\begin{array}{l}-2\bar{x}^{-2}\bar{z}=\bar{\epsilon}\\\bar{y}^{-2}(1+\bar{z}^2)=\bar{\epsilon}
\end{array}\right.\Leftrightarrow\left\{\begin{array}{l}\bar{z}=-\bar{\epsilon}\bar{x}^2/2\\\frac{\bar{\epsilon}^2}{4}\bar{x}^4+1=\bar{\epsilon}\bar{y}^2
\end{array}\right.
\]
That is, as long as the curve $E=(\bar{\epsilon}\bar{y}^2=\frac{\bar{\epsilon}^2}{4}\bar{x}^4+1\;|\;(\bar{x},\bar{y})\in(\mathbb{G}_m)^2/_{\F_q})$ has an $\F_q$-point, there exists $g\in G(F)$ such that $\Ad(g)A\in\supp(f)$, i.e. $I_A(f)\not=0$. Such an $\F_q$-point always exists. Indeed, $E$ differs from its smooth completion $E^c$ by eight $\bar{\F}_q$-points (two for $\bar{x}=0$, four for $\bar{y}=0$ and two at infinity), and none of them is defined over $\F_q$ because $\bar{\epsilon}\in\F_q$ is a non-square. Hence $E(\F_q)=E^c(\F_q)$, while $E^c$ is a geometrically connected projective smooth genus $1$ curve and always have an $\F_q$-point.
\end{proof}

\begin{remark} This ``none of the boundary points is defined over the residue field'' phenomenon seems to be related to the vanishing of $c_{\cO}(A)$ for those $\cO>\Ad(G(F))e$.
\end{remark}

\begin{remark}
Using a special case of a theorem of Kim and Murnaghan \cite[Thm. 2.3.1]{KM03} that $\Ad(g)A\in\fg(F)_{-\frac{1}{2}}\implies g\in G(F)_0$, one may reduce the computation of orbital integrals and thus $c_{\cO}(A)$ and $c_{\cO}(\pi)$ (for $\cO=\Ad(G(F))e$) to $\#E(\F_q)$. We predict that the dimension of the associated degenerate Whittaker model to be $\frac{1}{4}\#E(\F_q)$, analogous to \cite[Thm. 4.10 and Cor. 6.2]{Ts17}.
\end{remark}

\begin{remark} We may also work with depth $n+\frac{1}{2}$ representations, by replacing $A$ by $\varpi^{-n}A$ and replacing $G(F)_{\frac{1}{2}}$ by $G(F)_{n+\frac{1}{2}}$ in Theorem \ref{main}. The same proof works, except that $e$ needs to be replaced by $\varpi^{-n}e$, resulting in that every $\cO\in\WF^{rat}(\pi)$ is replaced by $\varpi^{-n}\cO$.
\end{remark}

\section{Langlands parameters}

The determination of the Langlands parameter corresponding to an individual $\pi$ in Theorem \ref{main} is part of the difficult problem solved in \cite{Ka15} with deep insight on the rectifying characters and their relation with transfer factors. The {\it collection} of all Langlands parameters corresponding to components $\pi$ in Theorem \ref{main} is nevertheless simpler, because it happens in this case that the rectifying characters can be absorbed into the choice of an irreducible component in $\operatorname{c-ind}_{G(F)_{\frac{1}{2}}}^{G(F)}\til{\psi}_{A}$. We describe the collection of such Langlands parameters, in the hope that it may be useful to interested readers.

Consider the ramified quadratic extension $E=F(\sqrt{\varpi})$. Write $\varpi_E=\sqrt{\varpi}$. A Langlands parameter we seek for is a homomorphism $\rho: W_F\ra \mathrm{SO}_5(\C)$. It has image in $\mathrm{O}_2(\C)\times\mathrm{O}_2(\C)\times\mathrm{SO}_1(\C)$, i.e. $\rho$ can be viewed as the sum of two orthogonal self-dual representations and a trivial representation. We have $\rho=\rho_1\oplus\rho_2\oplus\operatorname{triv}$ where $\rho_j=\Ind _{W_E}^{W_F}\chi_j$ for $j=1,2$. Write $\alpha_1=1$ and $\alpha_2=\sqrt{-1}$ for any choice of square root of $-1$ in $\cO_F^{\times}$. Then $\chi_j$ is a character on $E^{\times}$ satisfying
\begin{enumerate}[label=(\alph*)]
	\item $\chi_j|_{F^{\times}}\equiv 1$.
	\item $\chi_j(1+x\varpi)=1$ for all $x\in\cO_E$.
	\item $\chi_j(1+x\varpi_E)=\psi(2x\alpha_j)$ for all $x\in\cO_E$.
\end{enumerate}
Here $\psi$ is as chosen before (\ref{A}). We note that each $\chi_j$ is determined up to a freedom of $\chi_j(\varpi_E)\in\{\pm1\}$, and consequently there are $2^2=4$ candidates for such $\rho$. Relatedly, there are also $2^2$ components of $\pi$ in Theorem \ref{main}.

\p
\bibliographystyle{amsalpha}
\bibliography{biblio.bib}

\end{document}

Let $F$ be a finite extension of $\Q_p$, $G$ be a connected reductive group over $F$, and $\fg:=\Lie G$. For an irreducible smooth $\C$-representation $\pi$ of $G(F)$, the local character expansion of Howe and Harish-Chandra \cite[Thm. 16.2]{HC} asserts that the character $\Theta_{\pi}$ enjoys an asymptotic expansion on some neighborhood $U$ of the identity. To be precise, there exist constants $c_{\cO}(\pi)\in\C$ indexed by nilpotent $\Ad(G(F))$-orbits $\cO\subset\fg(F)$ such that
\begin{equation}
	\Theta_{\pi}|_U=\sum_{\cO}c_{\cO}(\pi)\cdot\left(\hat{I}_{\cO}\circ\log|_U\right)
\end{equation}
where $I_{\cO}$ is distribution giving by integrating on $\cO$ and $\hat{I}_{\cO}$ its Fourier transform. Here we fix an $\Ad(G)$-equivariant isomorphism between $\fg$ and its dual so that $\hat{I}_{\cO}$ is defined on $\fg(F)$.

In \cite{MW87}, M\oe glin and Waldspurger generalized a result of Rodier \cite{Rod75} and showed that if $\cO$ is maximal among those with $c_{\cO}(\pi)\not=0$, then with a natural normalization $c_{\cO}(\pi)$ is the dimension of the degenerate Whittaker model for $\pi$. The set of those $\cO$ with $c_{\cO}(\pi)\not=0$ and maximal among such is therefore of interest, and is typically called the wave-front set.

Let us first prove the second assertion about the regular orbits; readers familiar with Kostant section and Shelstad's result \cite{Sh89}probably already see how it can be proved with Shelstad's result, but we will do it in a related but different way to prepare some tools for later. Let
\[
\fg(F)_{-\frac{1}{2}}:=\{X\in\fg(F)\;|\;X\text{ is of the form }
\matr{
	\fm^0&\fm^0&\fm^{-1}&\fm^{-1}\\
	\fm^0&\fm^0&\fm^{-1}&\fm^{-1}\\
	\fm^0&\fm^0&\fm^0&\fm^0\\
	\fm^0&\fm^0&\fm^0&\fm^0\\
}
\}
\]
This is the Moy-Prasad lattice of depth $-\frac{1}{2}$, analogous to (\ref{MP}). We need a lemma \fixme{cite Kazhdan and Kim-Murnaghan}
\begin{lemma}
	Suppose $g\in G(F)$ is such that $\Ad(g)A\in\fg(F)_{-\frac{1}{2}}$. Then $g\in G(F)_0$.
\end{lemma}

\begin{proof} 
	The lemma can be proved directly with matrix manipulation. A fully general viewpoint is as follows: The centralizer $Z_G(A)$ is a torus that is anisotropic even over $\Q_P^{ur}$; this can be seen from that the characteristic polynomial over $A$ has no root in $\Q_p^{ur}$. The Bruhat-Tits building of $Z_G(A)$ is exactly the vertex for our Moy-Prasad filtration. The results then follows from \cite[Thm. 2.3.1]{KM03} and that $G(F)_0$ is the stabilizer of the vertex.
\end{proof}

To compute the Shalika germs $c_{\cO}(A)$ for regular nilpotent $\cO$, we will plug in four test functions $f$. Each is the characteristic function of the set of elements of the form
\[
\matr{
	\fm^0&\fm^0&p^{-1}+\fm^0&\fm^{-1}\\
	\fm^0&\fm^0&\fm^0&p^{-1}+\fm^0\\
	\fm^0&\fm^0&\fm^0&\fm^0\\
	1+\fm&\fm^0&\fm^0&\fm^0
}\text{ or }
\matr{
	\fm^0&\fm^0&p^{-1}+\fm^0&\fm^{-1}\\
	\fm^0&\fm^0&\fm^0&p^{-1}+\fm^0\\
	\fm^0&\fm^0&\fm^0&\fm^0\\
	\epsilon+\fm&\fm^0&\fm^0&\fm^0
}
\]
\[
\text{or }
\matr{
	\fm^0&\fm^0&\fm^{-1}&\fm^{-1}\\
	\fm^0&\fm^0&p^{-1}+\fm^0&\fm^{-1}\\
	1+\fm&\fm^0&\fm^0&\fm^0\\
	\fm&1+\fm&\fm^0&\fm^0\\
}\text{ or }
\matr{
	\fm^0&\fm^0&\fm^{-1}&\fm^{-1}\\
	\fm^0&\fm^0&p^{-1}\epsilon+\fm^0&\fm^{-1}\\
	1+\fm&\fm^0&\fm^0&\fm^0\\
	\fm&1+\fm&\fm^0&\fm^0\\
}\]
respectively. Let us call the characteristic functions $f_1$, $f_{\epsilon}$, $f_{p^{-1}}$ and $f_{p^{-1}\epsilon}$, respectively. We have
\begin{lemma} Suppose $d\in\{1,\epsilon,p^{-1},p^{-1}\epsilon\}$ and $I_{\cO}(f_d)\not=0$ for a nilpotent orbit $\cO$. Then $\cO$ contains $n_d$ in (\ref{regnil}) with the same $d$.
\end{lemma}

\begin{proof} We prove the case when $d\in\{1,\epsilon\}$; the other two cases are similar. Suppose $I_{\cO}(f_d)\not=0$. Then $\cO\cap\supp(f_d)\not=\emptyset$ and thus $\cO\cap\Ad(g)\supp(f_d)\not=\emptyset$ for any $g\in G(F)$. We have
	\[
	\matr{
		0&0&1&0\\
		1&0&0&0\\
		0&0&0&1\\
		0&-1&0&0
	}
	\matr{
		\fm^0&\fm^0&p^{-1}+\fm^0&\fm^{-1}\\
		\fm^0&\fm^0&\fm^0&p^{-1}+\fm^0\\
		\fm^0&\fm^0&\fm^0&\fm^0\\
		d+\fm&\fm^0&\fm^0&\fm^0
	}
	\matr{
		0&1&0&0\\
		0&0&0&-1\\
		1&0&0&0\\
		0&0&1&0
	}
	\]
	\[
	=\matr{
		\fm^0&\fm^0&\fm^0&\fm^0\\
		p^{-1}+\fm^0&\fm^0&\fm^{-1}&\fm^0\\
		\fm^0&-d+\fm&\fm^0&\fm^0\\
		\fm^0&\fm^0&-p^{-1}+\fm^0&\fm^0
	}=:S
	\]
	We assumed the existence $X\in \cO\cap S$. There exists $g\in\matr{1&0&0&0\\0&1&0&0\\y&x&1&0\\0&y&0&1}\subset G(F)$ with $x,y\in\fm$ such that
	\[
	\Ad(g)X\in\matr{
		\fm^0&\fm^0&\fm^0&\fm^0\\
		p^{-1}+\fm^0&\fm^0&\fm^{-1}&\fm^0\\
		0&-d+\fm&\fm^0&\fm^0\\
		0&0&-p^{-1}+\fm^0&\fm^0
	}\cap\cO
	\]
	so that
	\[
	p^{-2n}\Ad(\matr{p^{3n-1}&0&0&0\\0&p^n&0&0\\0&0&p^{-n}&0\\0&0&0&p^{-3n+1}})\Ad(g)X\in
	\matr{
		\fm^{2n}&\fm^{4n-1}&\fm^{6n-1}&\fm^{8n-2}\\
		1+\fm&\fm^{2n}&\fm^{4n-1}&\fm^{6n-1}\\
		0&-d+\fm&\fm^{2n}&\fm^{4n-1}\\
		0&0&-1+\fm&\fm^{2n}
	}\cap\cO
	\]
	Taking $n\ra+\infty$ this shows that
	\[
	n_{-d}=\matr{0&0&0&0\\1&0&0&0\\0&-d&0&0\\0&0&-1&0}
	\]
	lies in the closure of $\cO$. Since $n_{-d}$ is already regular, we have $n_d\in\cO$ (we assumed $p\equiv 1\mod 4$ so $-1$ is a square) as asserted.
\end{proof}

--------

Suppose on the contrary we have an orbit $\cO$ such that $c_{\cO}(A)\not=0$. Note that $H^1(F,Z(G))$ acts simply transitively on the set of regular nilpotent orbits via $H^1(F,Z(G))\cong\operatorname{coker}(G(F)\ra G_{ad}(F))$ (where $G_{ad}=G/Z(G)=\PSp_4$). Suppose $\alpha\in H^1(F,Z(G))$ corresponds to the class of $g_{\alpha}\in G_{ad}(F)$, and $\cO=\Ad(g_{\alpha}G(F))n_{i/2}$, then we have $c_{\cO}(|Ad(g_{\alpha})A)=c_{\Ad(G(F))}(A)\not=0$. Since the orbit of such $A$ is unique in its stable orbit (has to meet the Kostant section), we have that $\Ad(G(F))\Ad(g_{\alpha}A)=\Ad(G(F))A$. In group cohomology, this exactly means the image of $\alpha$ under
\begin{equation}\label{coho}
	H^1(F,Z(G))\ra H^1(F,Z_G(A))
\end{equation}
is trivial. We know this is already the case for the class of $\alpha$ such that corresponding to $n_{-p^{-1}}$ (we chose to pin $n_{i/2}$, matching it with the trivial class). If the same happens for another $\cO$, then (\ref{coho}) is trivial on two non-trivial elements. Since $Z(G)=\mu_2\implies H^1(F,Z(G))\cong\mu_2\times\mu_2$, this will imply that (\ref{coho}) is identically trivial. That cannot be true; $Z_G(A)$ is the product of two rank $1$ ramified tori, and the map from $H^1(F,\mu_2)$ to the $H^1(F,-)$ of any such torus is non-trivial.